\newtheorem{theorem}{Theorem}[section]
\newtheorem{lemma}{Lemma}[section]
\newtheorem{example}{Example}[section]
\def\S{\mathbb{S}}
\def\R{\mathbb{R}}
\def\N{\mathbb{N}}
\numberwithin{equation}{section}
\title{\bf Optimal designs for regression with spherical data}
\begin{document}

\author{
 {\small Holger Dette, Maria Konstantinou, Kirsten Schorning, Josua G\"osmann} \\
{\small Ruhr-Universit\"at Bochum} \\
{\small Fakult\"at f\"ur Mathematik} \\
{\small 44780 Bochum, Germany}
}
\date{}
\maketitle

\begin{abstract}
In this paper optimal designs  for regression  problems with  spherical  predictors of  arbitrary dimension
are considered. Our work is motivated by applications in material sciences,  where  crystallographic textures
such as the missorientation distribution or  the grain boundary distribution (depending on  a four dimensional 
spherical predictor) are represented by  series of hyperspherical harmonics, which are estimated from experimental or simulated data.  \\
For this type  of estimation problems  we explicitly determine optimal designs with respect to Kiefer's $\Phi_p$-criteria  and 
a  class of orthogonally invariant information criteria recently introduced in the literature.  In particular, we show that  the uniform
distribution on the $m$-dimensional sphere is optimal and construct 
discrete and implementable designs with the same information matrices as the continuous optimal designs. 
Finally, we illustrate the advantages of the new designs for series estimation  by  
hyperspherical harmonics, which are symmetric with respect to the first and second crystallographic point group.
\end{abstract}
\date{}

AMS Subject Classification: Primary 62K05, Secondary  33C55 \\
Keywords and Phrases:  optimal design,  hyperspherical harmonics, $\Phi_p$-optimality, Gauss quadrature, series estimation
\maketitle

\section{Introduction}

Regression problems with a predictor of  spherical nature arise in various 
fields such as  geology,  crystallography, astronomy (cosmic microwave background radiation 
data), the  calibration
 of  electromagnetic motion-racking systems
 or  the  representation of spherical viruses  [see \cite{chapchenkim1995}, \cite{zhengetal1995}, \cite{changetal2000}, \cite{schaboog2003}, \cite{genoveseetal2004}, \cite{shintakamurd2007}  among many others] and their 
 parametric and  nonparametric estimation has found
 considerable attention in the  literature. 
 
 Several methods for estimating a spherical regression function nonparameterically have been proposed in the literature.
 \cite{marzpanztay2009,marpantay2014}  investigate kernel type methods, while spherical splines 
 have been considered by  \cite{whaba1981}  and  \cite{alfedetaL1996}.
 A frequently used technique is that of series estimators  based on spherical harmonics [see \cite{abrialetal2008} for example], which - roughly speaking -  generalise  estimators of a regression function on the line based on  Fourier
series  to data on the sphere.  Alternative series estimators have been proposed by   \cite{narcowichetal2006}, \cite{baldietal2009} and \cite{monnier2011} who   suggest   to use spherical wavelets (needlets) in situations where better localisation properties
are required. Most authors consider the $2$-dimensional sphere $\S_3$ in $\mathbb{R}^3 $ as they are interested in 
the development of statistical methodology for concrete applications such as  earth and planetary sciences.

On the other hand, regression models with spherical predictors with a dimension larger than three  have also found considerable attention in the literature, mainly in physics, chemistry and material sciences. Here  
 predictors on the unit sphere
  $$
 \S_m = \{x \in  \R^m : ||x||_2 = 1 \},
 $$ 
 with $m>3$  and  
series expansions in terms of the so called {\it  hyperspherical harmonics} are considered. 
These functions form an orthonormal system with respect to the uniform distribution on the sphere $\S_m $ 
and  
 have been, for example, widely used to solve the
Schroedinger equation by reducing  the problem  to a system of coupled ordinary differential equations  in a single variable [see for example \cite{averywen1982} or  \cite{krivec1998} among many others]. Further applications in this field can be found in \cite{meremianin2009}, who proposed the use of
hyperspherical harmonics for the  representation of  the wave function of the hydrogen atom in the momentum space.
Similarly, \cite{lombardietal2016} suggested to represent the  potential energy surfaces (PES) of atom-molecule or molecular dimers interactions in terms of a   series of
four-dimensional hyperspherical harmonics. Their   method consists in fitting a certain number of points of the PES, previously determined, selected on the basis of geometrical and physical characteristics of the system.  The resulting potential energy function is suitable to serve as a PES for molecular dynamics simulations. \cite{hosseinboretal2013} applied  four-dimensional  hyperspherical harmonics in medical imaging and estimated the coefficients
in the corresponding series expansion via least squares methods to analyse  brain subcortical structures. A further important application of series expansions appears in material sciences, where  crystallographic textures as quaternion distributions are represented  by means of series expansions
based on (symmetrized) hyperspherical harmonics [see \cite{bunge1993}, \cite{zhengetal1995}, \cite{masonschuh2008} and  \cite{mason2009} among many others]. 

It is well known that a carefully designed experiment can improve the statistical inference in regression analysis substantially, and numerous authors have considered the problem of constructing optimal designs for various regression models [see, for example, the monographs of \cite{fedorov1972}, \cite{silvey} and  \cite{pukelsheim}].
On the other hand, despite of its importance, the problem of constructing optimal or efficient designs  for least squares (or alternative) estimation of the coefficients in 
 series expansions based on hyperspherical harmonics has not found much interest in the statistical literature, in particular if the
 dimension $m$ is large. The case $m=2$  corresponding to Fourier regression models  has been discussed intensively [see    \cite{karstu1966}, page 347,   \cite{laustu1985},  \cite{kittittor1988} and
    \cite{detmel2003}
    among many  others]. Furthermore, optimal designs for   series estimators in terms of  spherical harmonics (that is, for $m=3$) have been determined by \cite{detmelpep2005} and \cite{dettwien2009}, however, to the best of our knowledge no results are available for hyperspherical harmonics if the dimension of the predictor  is larger than $3$. 
    
    In the present paper we consider optimal design problems
    for regression models with a spherical predictor of dimension $m >3$
    and explicitly determine optimal designs for series estimators in hyperspherical harmonic expansions.
In Section \ref{sec2} we introduce some basic facts about optimal design theory and hyperspherical harmonics, which will be required for the results presented in this paper. Analytic solutions of the optimal design problem are given in Section \ref{sec31}, where we determine
optimal designs with respect to all  Kiefer's $\Phi_p$-criteria [see \cite{kiefer1974}]  
as well as with respect to a class of optimality criteria recently introduced by \cite{harman2004}. As it turns out the approximate optimal designs are absolute continuous distributions on the sphere and thus cannot be directly implemented in practice. Therefore,  in Section \ref{sec32} we provide discrete designs with the same information matrices as the continuous optimal designs. To achieve this we construct new Gaussian quadrature formulas for integration on the sphere, which are of own interest. In Section \ref{sec4} we investigate the performance of the optimal designs determined in Section \ref{sec32} when they are used in 
typical applications in material sciences. Here  energy functions are represented in terms 
of series of  {\it symmetrized hyperspherical harmonics}  which are obtained as well as defined as linear combinations of the hyperspherical harmonics such that the  symmetry of a  crystallographic point group is reflected in the energy function.
It is demonstrated that the derived designs have very good efficiencies (for the first crystallographic point group the design is in fact 
$D$-optimal). Finally, a proof of a technical result can be found in Appendix \ref{sec5}.\\
The results obtained in this paper provide a first step towards the solution of optimal design problems for regression models with spherical predictors if the dimension is $m>3$ and offer a deeper understanding  of the general mathematical structure of hyperspherical harmonics,  which so far were only  considered in the cases $m=2$ and $m=3$.

\section{Optimal designs and hyperspherical harmonics} \label{sec2}

\subsection{Optimal design theory} \label{sec21}

We consider the linear regression model
\begin{equation} \label{2.1}
E[Y|x] =f^T(x) {\bm c} ~;~ x \in {\cal X} ,
\end{equation}
where $f^T(x) =(f_1(x) , \ldots , f_D (x) )$ is a vector of linearly independent regression functions,
${\bm c}\in \mathbb{R}^D$ is the vector of unknown parameters,
$x$ denotes a real-valued covariate which  varies in a compact design space, say ${\cal X}$ (which will be $\S_m$ in later sections),
and different observations
are assumed to be independent with the same variance, say $\sigma^2 >0$.
 Following \cite{kiefer1974} we define an approximate  design as a probability measure $\xi$ on the set
 ${\cal X}$ (more precisely on its Borel field). If the design $\xi$ has finite support with masses $w_i$ at the points $x_i$ $(i = 1, \dots, k)$ and $n$
observations can be made by the experimenter, this means that the quantities $w_i n$ are rounded to integers, say $n_i$, satisfying
$\sum^k_{i=1} n_i =n$, and the experimenter takes $n_i$ observations at each location $x_i \:  (i=1, \dots, k)$.
The information matrix of the  least squares estimator is defined  by
\begin{equation} \label{2.2}
M(\xi)=  \int_{\cal X}   f(x) f^T(x)  d\xi(x),
\end{equation}
[see \cite{pukelsheim}] and measures the quality of the design $\xi$ as the matrix $\frac {\sigma^2}{n} M^{-1} (\xi)$ can be considered as an approximation of the
covariance matrix $\sigma^2(X^TX)^{-1}$
of the least squares estimator in the corresponding   linear model  ${\bm Y } = X {\bm c} + \varepsilon$.
Similarly,  if the main interest is the estimation of $s$ linear combinations $K^T {\bm c} $, where $K \in \mathbb{R}^{D \times s}$ is a given matrix of rank $s \leq D$, the covariance matrix of the  least squares estimator  for these linear combinations is
given by  $\frac{\sigma^2}{n} (K^T (X^TX)^{-} K)$, where $(X^TX)^{-}$ denotes the generalized inverse of the matrix
$X^TX$ and it is assumed that $\text{range}(K) \subset \text{range}(X^TX)$.
The corresponding analogue of its inverse  for an approximate design $\xi$ satisfying the range inclusion
$\text{range}(K) \subset \text{range}(M(\xi))$
is given by (up to the constant $\frac{\sigma^2}{n}$)
\begin{equation}\label{2.3}
C_K(\xi )=   (K^T M^{-}(\xi) K) ^{-1} ~.
\end{equation}
It follows from \cite{pukelsheim}, Section 8.3, that  for each design $\xi$ there always exists a design $\bar \xi$ with at most $s(s+1)/2$
support points such that $C_K(\xi )= C_K(\bar \xi )$.
An optimal design maximises an appropriate functional of the matrix $M(\xi)$  and numerous criteria have been proposed
in the literature to discriminate between competing designs [see \cite{pukelsheim}]. Throughout this paper we consider
Kiefer's $\Phi_p$-criteria, which are defined   for  $-\infty \leq p < 1$ as
\begin{equation}\label{2.4}
\Phi_p(\xi) = (\text{tr} \big  \{  \big  (C_K(\xi ))^{p}  \big  \} \big  )^{1/p}= (\text{tr}  \big  \{  \big  (K^T M^-(\xi) K)^{-p} \big  \} \big  )^{1/p} ~.
\end{equation}
Following \citet{kiefer1974}, a design $\xi^*$ is called $\Phi_p$-optimal for estimating the linear combinations $K^T {\bm c}$ if
$\xi^*$ maximises the expression $\Phi_p(\xi) $ among all approximate designs $\xi$
for which $K^T {\bm c}$ is estimable, that is, $\text{range}(K) \subset \text{range}(M(\xi))$. This family of optimality criteria includes the well-known criteria of $D$-, $E$- and $A$-optimality corresponding to the cases $p=0$, $p=-\infty$ and $p=-1$, respectively.  \\
Moreover, we consider a generalised version of the criterion of $E$-optimality introduced by \cite{harman2004} 
[see also 
\cite{filhar2011}]. For the information matrix $M(\xi)$ let $\lambda(M(\xi)) = (\lambda_1(M(\xi)), \ldots,\lambda_D(M(\xi)))^T$ be the vector of the eigenvalues of $M(\xi)$ in nondecreasing order. Then, for $s \in 1, \ldots, D$, we define $\Phi_{E_s}(\xi)$ by the sum of the $s$-th smallest eigenvalues of $M(\xi)$, that is,
\begin{equation}\label{e_scrit}
\Phi_{E_s}(\xi)= \sum_{i=1}^s\lambda_i(M(\xi)).
\end{equation} 
For a fixed $s \in \{1, \ldots, D\}$ we call a design $\xi^*$ $\Phi_{E_s}$-optimal if it maximises the term $\Phi_{E_s}(\xi)$ among all approximate designs $\xi$. 

\medskip

In general, the determination of $\Phi_p$-optimal designs and of  $\Phi_{E_s}$-optimal designs in an explicit form is a very difficult task and the corresponding optimal design problems have only been solved in rare circumstances [see for example \cite{cheng1987}, \cite{dettstud1993}, \cite{pukelsheim}, p.241, and \cite{harman2004}]. In the following discussion we will explicitly determine $\Phi_p$-optimal designs
for  regression models which arise from a series expansion of a  function on the $m$-dimensional sphere $\S_m$ in terms of hyperspherical harmonics. It turns out that the $\Phi_p$-optimal designs are also $\Phi_{E_s}$-optimal for an appropriate choice of $s$.\\
We introduce the hyperspherical harmonics next.

\subsection{Hyperspherical harmonics}\label{sec22}

Assume that the design space is given  by the  $m$-dimensional sphere $\S_m = \{ x \in \R^m : ||x||_2=1\}$. The hyperspherical harmonics are functions of $m-1$ dimensionless variables, namely the hyperangles, which describe the points $x=( x_1, \ldots, x_m)^T \in \S_m $
 on the hypersphere by the equations
\begin{eqnarray}\label{polar}
&&x_1 = \cos \theta_1, \\ \nonumber
&&x_2 = \sin \theta_1 \cos \theta_2, \\ \nonumber
&&x_3 = \sin \theta_1 \sin \theta_2 \cos \theta_3, \\ \nonumber
&& \vdots \quad \quad \quad \quad \quad \vdots \\ \nonumber
&&x_{m-1} = \sin \theta_1 \ldots \sin \theta_{m-2} \cos \phi, \\ \nonumber
&&x_{m} = \sin \theta_1 \ldots \sin \theta_{m-2} \sin \phi,
\end{eqnarray}
where $\theta_i \in [0,\pi]$ for all $i=1, \ldots, m-2$, $\phi \in [-\pi, \pi]$ [see, for example, \cite{andrews} or \cite{meremianin2009}].  As noted by \cite{dokmanic}, this choice of coordinates is not unique but rather a matter of convenience since it is a natural generalisation of the spherical polar coordinates in $\mathbb{R}^3$.  \\
In the literature, hyperspherical harmonics are given explicitly in a complex form (see, for example, \cite{vilenkin} and \cite{averywen1982}). Following the notation in \cite{averywen1982}, they are defined as
\begin{equation*}
\tilde{Y}_{\lambda,\boldsymbol{\mu_{m-3}}, \pm \mu_{m-2}}(\boldsymbol{\theta_{m-2}}, \phi) = \tilde{A}_{\lambda, \boldsymbol{\mu_{m-2}}} \prod_{i=1}^{m-2}{\left[ C_{\mu_{i-1}-\mu_i}^{\mu_i+\frac{m-i-1}{}} (\cos \theta_i ) (\sin \theta_i)^{\mu_i} \right]} e^{\pm i\mu_{m-2}\phi},
\end{equation*}
where $\boldsymbol{\theta_{m-2}} = (\theta_1, \ldots, \theta_{m-2})$,  $\boldsymbol{\mu _{k}} = (\mu_1, \ldots, \mu_{k})$ for
 $k=m-2, m-3$, and
\begin{equation*}
\tilde{A}_{\lambda, \boldsymbol{\mu_{m-2}}} = \frac{1}{\sqrt{2\pi}} \prod_{i=1}^{m-2}{\Big[\frac{2^{2\mu_i + m - i -3}(\mu_{i-1}-\mu_i)! (2\mu_{i-1}+m-i-1) \Gamma^2(\mu_i +\frac{m-i-1}{2})}{\pi(\mu_{i-1}+\mu_i+m-i-2)!}\Big]^{1/2}} ,
\end{equation*}
is a normalising constant, $\lambda:=\mu_0 \geq \mu_1 \geq \mu_2 \geq \ldots \geq \mu_{m-2} \geq 0$ are a set of integers and the functions
$$
C_{\mu_{i-1}-\mu_i}^{\mu_i+\frac{m-i-1}{2}}\big(x \big),$$
 are the Gegenbauer polynomials (of degree $\mu_{i-1}-\mu_i \in \mathbb{N}_0$ with parameter $\mu_i+\frac{m-i-1}{2}$), which are orthogonal with respect to
  the measure
  $$
  (1- x^2)^{\mu_i+(m-i-1)/2-1/2} I_{[-1,1]}(x) dx,
  $$
  (here $I_A(x)$ denotes the indicator function of the set $A$).
 The complex hyperspherical functions are orthogonal to their corresponding complex conjugate and form an orthonormal basis  of the space of  square integrable functions with respect to the uniform distribution on the sphere
 $$
 L^2 (\S_m) = \{ f: \S_m \to \mathbb{C} \mid \int_{\S_m} |f(x)|^2dx < \infty \}.
 $$
 In fact
 the constants $\tilde{A}_{\lambda, \boldsymbol{\mu_{m-2}}}$ are chosen based on this property [see, for example,
  \cite{averywen1982} for more details].

However, as mentioned in \cite{masonschuh2008}, expansions of  real-valued functions on the sphere are easier to handle in terms
of real hyperspherical harmonics which are obtained from the complex hyperspherical harmonics via the linear transformations
\begin{equation}\label{real-def}
\begin{aligned}
Y_{\lambda, \boldsymbol{\mu_{m-3}}, \mu_{m-2}}(\boldsymbol{\theta_{m-2}}, \phi) &=
\frac{(-1)^{\mu_{m-2}}[\tilde{Y}_{\lambda, \boldsymbol{\mu_{m-3}}, \mu_{m-2}}+\tilde{Y}_{\lambda, \boldsymbol{\mu_{m-3}}, -\mu_{m-2}}]}{\sqrt{2}} \\
&= A_{\lambda, \boldsymbol{\mu_{m-3}}} \prod_{i=1}^{m-3}{\Big [ C_{\mu_{i-1}-\mu_i}^{\mu_i+\frac{m-i-1}{2}} (\cos \theta_i ) (\sin \theta_i)^{\mu_i} \Big]} \\
& ~~ \times B_{\mu_{m-3},\mu_{m-2}} P_{\mu_{m-3}}^{\mu_{m-2}}  ( \cos \theta_{m-2}  ) \cos (\mu_{m-2}\phi) ,  \\
Y_{\lambda, \boldsymbol{\mu_{m-3}}, -\mu_{m-2}}(\boldsymbol{\theta_{m-2}}, \phi) & = \frac{(-i)(-1)^{\mu_{m-2}}[\tilde{Y}_{\lambda, \boldsymbol{\mu_{m-3}}, \mu_{m-2}}-\tilde{Y}_{\lambda, \boldsymbol{\mu_{m-3}}, -\mu_{m-2}}]}{\sqrt{2}} \\
& = A_{\lambda, \boldsymbol{\mu_{m-3}}} \prod_{i=1}^{m-3}{\left[ C_{\mu_{i-1}-\mu_i}^{\mu_i+\frac{m-i-1}{2}} (\cos \theta_i ) (\sin \theta_i)^{\mu_i} \right]} \\
& ~~\times  B_{\mu_{m-3},\mu_{m-2}} P_{\mu_{m-3}}^{\mu_{m-2}}  ( \cos \theta_{m-2}  ) \sin (\mu_{m-2}\phi) ,\\
Y_{\lambda, \boldsymbol{\mu_{m-3}}, 0}(\boldsymbol{\theta_{m-2}}, \phi) &= (-1)^{\mu_{m-2}} \tilde{Y}_{\lambda, \boldsymbol{\mu_{m-3}}, 0} \\
& = A_{\lambda, \boldsymbol{\mu_{m-3}}} \prod_{i=1}^{m-3}{\left[ C_{\mu_{i-1}-\mu_i}^{\mu_i+\frac{m-i-1}{2}} (\cos \theta_i ) (\sin \theta_i)^{\mu_i} \right]} \\
&~~\times \frac{B_{\mu_{m-3},0}}{\sqrt{2}} P_{\mu_{m-3}}^{0}  ( \cos \theta_{m-2}  ) ,
\end{aligned}
\end{equation}
where
\begin{equation}\label{real-constant1}
 A_{\lambda, \boldsymbol{\mu_{m-3}}} = \prod_{i=1}^{m-3}{\Big[\frac{2^{2\mu_i + m - i -3}(\mu_{i-1}-\mu_i)! (2\mu_{i-1}+m-i-1) \Gamma^2(\mu_i +\frac{m-i-1}{2})}{\pi(\mu_{i-1}+\mu_i+m-i-2)!}\Big]^{1/2}} , 
\end{equation}
\begin{equation}\label{real-constant2}
B_{\mu_{m-3},\mu_{m-2}} = \Big[ \frac{2(2\mu_{m-3} + 1)(\mu_{m-3}-\mu_{m-2})!}{4 \pi (\mu_{m-3}+\mu_{m-2})!} \Big]^{1/2} ,
\end{equation}
and $P_{\mu_{m-3}}^{\mu_{m-2}}  ( \cos \theta_{m-2}  )$ is the associated Legendre polynomial which can be expressed in terms of a Gegenbauer polynomial via
\begin{equation*}
(-1)^{\mu_{m-2}} \frac{(2 \mu_{m-2})!}{2^{\mu_{m-2}} (\mu_{m-2})!} (\sin \theta_{m-2})^{\mu_{m-2}} C_{\mu_{m-3}-\mu_{m-2}}^{\mu_{m-2}+\frac{1}{2}} ( \cos \theta_{m-2}  ) = P_{\mu_{m-3}}^{\mu_{m-2}} ( \cos \theta_{m-2}  ).
\end{equation*}
 It is easy to check that in the case of $\mathbb{R}^3$, the expressions in \eqref{real-def}, \eqref{real-constant1} and \eqref{real-constant2} give the well known spherical harmonics involving only the associated Legendre polynomial [see Chapter 9 in \cite{andrews} for more details]. \\
The real hyperspherical harmonics defined in \eqref{real-def}, \eqref{real-constant1} and \eqref{real-constant2} preserve the
orthogonality  properties of complex hyperspherical harmonics proven in \cite{averywen1982}. In other words, the real hyperspherical harmonics form an orthonormal basis of the Hilbert space
$$
L^2(\S_m, d\Omega_m) = \Big \{ g: \S_m \to \R ~|~ \int |  g(\boldsymbol{\theta_{m-2}},\phi)|^2 d\Omega_m < \infty \Big \},
$$
 that is,
\begin{equation}\label{orthonormal}
\int Y_{\lambda, \boldsymbol{\mu_{m-3}}, \mu_{m-2}}(\boldsymbol{\theta_{m-2}}, \phi) Y_{\lambda', \boldsymbol{\mu_{m-3}}', \mu_{m-2}'}(\boldsymbol{\theta_{m-2}}, \phi) \,d \Omega_m = \delta_{\lambda \lambda'} \prod_{i=1}^{m-2} \delta_{\mu_i \mu_i'},
\end{equation}
where 
\begin{equation*}
d \Omega_m = (\sin \theta_1)^{m-2} d\theta_1 (\sin \theta_2)^{m-3} d\theta_2 \ldots (sin \theta_{m-2}) d\theta_{m-2} d\phi,
\end{equation*}
is the element of solid angle. \\

We now consider the linear regression model \eqref{2.1}, where the vector of regression functions is obtained by
a truncated expansion of a function $ g \in  L^2(\S_m, d\Omega_m)$    of order, say, $d$
in terms of hyperspherical harmonics, that is,
\begin{equation*}
\sum_{\lambda=0}^d \sum_{\mu_1=0}^{\lambda} \ldots \sum_{\mu_{m-2}=-\mu_{m-3}}^{\mu_{m-3}} c_{\lambda, \boldsymbol{\mu_{m-3}}, \mu_{m-2}} Y_{\lambda, \boldsymbol{\mu_{m-3}}, \mu_{m-2}} (\boldsymbol{\theta_{m-2}}, \phi).
\end{equation*}
Consequently, we obtain form \eqref{2.1} (using the coordinates $\boldsymbol{\theta_{m-2}}=(\theta_1, \ldots, \theta_{m-2})$, $\phi)$
\begin{equation}\label{reg-model}
E[Y| \boldsymbol{\theta_{m-2}} , \phi] =  f_d^T(\boldsymbol{\theta_{m-2}}, \phi) {\bm c} ,
\end{equation}
where
\begin{eqnarray*}
f_d(\boldsymbol{\theta_{m-2}}, \phi) &=& \big ( Y_{0, 0, \ldots, 0}(\boldsymbol{\theta_{m-2}}, \phi) , Y_{1, 0, \ldots, 0}(\boldsymbol{\theta_{m-2}}, \phi),
Y_{1, 1, 0, \ldots, 0}(\boldsymbol{\theta_{m-2}}, \phi) ,  \\
&& ~~~~~~~~~~~~ \ldots, Y_{1,1,\ldots,1,-1}(\boldsymbol{\theta_{m-2}}, \phi), \ldots, Y_{d,d,\ldots, d} (\boldsymbol{\theta_{m-2}}, \phi)\big )^T ,
\end{eqnarray*}
is the vector of hyperspherical harmonics of order $d$ and the vector of parameters is given by
\begin{equation*}
{\bm c} = \left( c_{0, 0, \ldots, 0}, c_{1, 0, \ldots, 0}, c_{1, 1, 0, \ldots, 0}, \ldots, c_{1,1,\ldots,1,-1}, \ldots, c_{d,d,\ldots, d} \right)^T .
\end{equation*}
Note that the dimension of the vectors $f_d$ and $c$ is
\begin{equation}\label{D}
D:= \sum_{\lambda=0}^d \sum_{\mu_1=0}^\lambda \ldots \sum_{\mu_{m-3}=0}^{\mu_{m-4}} \sum_{\mu_{m-2}=-\mu_{m-3}}^{\mu_{m-3}} 1 \ = \ \sum_{\lambda=0}^d \frac{(m+2\lambda-2)(\lambda+m-3)!}{\lambda!(m-2)!} ,
\end{equation}
where the expression for the sums over the $\mu_i$'s ($i=1,\ldots,m-2$) is obtained from  \cite{averywen1982}.

\section{$\Phi_p$- and $\Phi_{E_s}$- optimal designs for hyperspherical harmonics} \label{sec3}
\subsection{Optimal designs with a Lebesgue density} \label{sec31}

In this section we determine $\Phi_p$-optimal designs for estimating the parameters in a series expansion of a function
defined on the unit sphere $\S_m$. The corresponding regression model is defined by \eqref{reg-model} and  as
mentioned in Section \ref{sec21} a $\Phi_p$-optimal (approximate) design maximises the criterion \eqref{2.4} in the class of all  probability
measures $\xi$ on the set  $[0,\pi]^{m-2} \times [-\pi,\pi]$ satisfying the range inclusion    $\text{range}(K) \subset \text{range}(M(\xi))$, where the information matrix
$M(\xi)$ is given by
\begin{equation*}
M(\xi) = \int_{-\pi}^{\pi} \int_0^{\pi} \ldots \int_0^{\pi} f_d(\boldsymbol{\theta_{m-2}}, \phi) f_d^T(\boldsymbol{\theta_{m-2}}, \phi) \,d\xi(\boldsymbol{\theta_{m-2}}, \phi).
\end{equation*}
We are interested in finding a design that is efficient for the estimation of the Fourier coefficients corresponding to the $s(k)$ hyperspherical harmonics  
$$
Y_{k,0,\ldots,0},Y_{k,1,0,\ldots,0}, \ldots, Y_{k,\ldots,k,-k},\ldots,Y_{k,\ldots,k,k},
$$
 where
\begin{equation}\label{s-def}
s(k) = \frac{(m+2k-2)(k+m-3)!}{k!(m-2)!} ,
\end{equation}
and $k \in \{ 0,\ldots,d \}$ denotes a given level of resolution. To relate this to the definition of the $\Phi_p$-optimality criteria, let $q \in \mathbb{N}_0$, $0\leq k_0 <k_1 < \ldots < k_q \leq d$ and $0_{k,l}$ be the $s(k) \times s(l)$ matrix with all entries equal to $0$. Define the matrix
\begin{equation}\label{K}
K^T = (K_{j,l})_{j=0,\ldots,q}^{l=0,\ldots, d} ,
\end{equation}
where
\begin{equation}\label{K-more}
K_{j,l} =
\begin{cases}
0_{k_j, l} & l \neq k_j \\
I_{s(k_j)} & l=k_j
\end{cases}  ,
\end{equation} 
$I_a$ denotes the $a \times a$ identity and $0_{a,b}$ is an $a \times b$ matrix with all entries equal to $0$. Note that $K \in \mathbb{R}^{D \times s}$ where $D=\sum_{\lambda=0}^d s(\lambda)$
is defined in \eqref{D},  
 and that $K^T {\bm c} \in \mathbb{R}^s$  defines a  vector with
 \begin{equation}
\label{sdeff}
s=\sum_{j=0}^q s(k_j)  =  \sum_{j=0}^q \frac{(m+2k_j-2)(k_j+m-3)!}{k_j! (m-2)!},  
\end{equation}
  components, that is 
\begin{equation}
\left\{ c_{k_j, \mu_1, \ldots, \mu_{m-2}}| k_j \geq \mu_1 \geq \ldots \geq \mu_{m-3}; -\mu_{m-3} \leq \mu_{m-2} \leq \mu_{m-3}; j = 0, \ldots, q  \right\},
\end{equation}
($s \leq D$).
The following theorem shows that  the uniform distribution on the hypersphere is $\Phi_{E_s}$- and  $\Phi_p$-optimal for estimating the parameters $K^T {\bm c}$
(for any $-\infty \leq p < 1$) .
\medskip

\begin{theorem}\label{optimal-fourier}
Let $p\in[-\infty,1)$, $0\leq k_0 <k_1 < \ldots < k_q \leq d$ be given indices and denote by $K \in \mathbb{R}^{D \times s}$ the matrix defined by \eqref{K} and \eqref{K-more}.  Consider the design given by the uniform distribution on the hypersphere, that is,
\begin{eqnarray}\label{opt-design}
\xi^* &=& \xi^*(d\theta_1,\ldots,d\theta_{m-2},d\phi) = \frac{d \Omega_m}{\tilde{\Omega}} \\
\nonumber
&=&
 \frac{1}{\tilde{\Omega}} (\sin \theta_1)^{m-2} d\theta_1 (\sin \theta_2)^{m-3} d\theta_2 \ldots (\sin \theta_{m-2}) d\theta_{m-2} d\phi,
\end{eqnarray}
where $\tilde{\Omega}$ is a normalising constant given by
\begin{equation}\label{constant}
\tilde{\Omega} = \int_{-\pi}^{\pi} \int_0^{\pi} \ldots \int_0^{\pi} (\sin \theta_1)^{m-2} d\theta_1 (\sin \theta_2)^{m-3} d\theta_2 \ldots (sin \theta_{m-2}) d\theta_{m-2} d\phi = \frac{N_m}{(m-2)!!} ,
\end{equation}
$N_m = 2(m-2)!! \pi^{m/2} / \Gamma(m/2)$ and  the double factorial $n!!$ for $n \in\mathbb{N}$  is defined by
$$n!! = \begin{cases} \prod_{k=1}^{\tfrac{n}{2}} (2k) \quad &n \mbox{ is even} \\ 
 \prod_{k=1}^{\tfrac{n+1}{2}} (2k-1) \quad & n \mbox{ is odd}  \end{cases}.$$
 \begin{itemize}
 \item[(i)] The  information matrix of $\xi^*$  is given by $M(\xi^*)=\frac{1}{\tilde{\Omega}} I_{D}$, where $D$ is defined in \eqref{D}.
 \item[(ii)]  The design  $\xi^*$  is $\Phi_p$-optimal  for estimating the linear combination $K^T {\bm c}$ in the regression model \eqref{reg-model}.
  \item[(iii)]  Let  $s = \sum_{i=0}^q s(k_i)$ be the number of considered hyperspherical harmonics where $s(k_i)$ is defined by \eqref{s-def}.
  Then   the design $\xi^*$ defined by \eqref{opt-design} is also $\Phi_{E_s}$-optimal.
  \end{itemize}
\end{theorem}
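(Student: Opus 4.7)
The plan is to derive all three claims from a single underlying identity, namely a uniform value for $\operatorname{tr}(K^{T}M(\xi)K)$. The starting point for (i) is the orthonormality relation \eqref{orthonormal}: the $(i,j)$-entry of $M(\xi^{*}) = \tilde{\Omega}^{-1}\int f_{d}f_{d}^{T}\,d\Omega_{m}$ equals $\tilde{\Omega}^{-1}\delta_{ij}$, so $M(\xi^{*}) = I_{D}/\tilde{\Omega}$. Since the columns of $K$ are $s$ distinct standard basis vectors of $\mathbb{R}^{D}$, we have $K^{T}K = I_{s}$ and consequently $C_{K}(\xi^{*}) = (K^{T}M^{-1}(\xi^{*})K)^{-1} = I_{s}/\tilde{\Omega}$.

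The heart of the argument, and what I expect to be the main technical step, is the identity, valid for every admissible design $\xi$,
$$
\operatorname{tr}\bigl(K^{T}M(\xi)K\bigr) = \int_{\S_{m}}\sum_{j=0}^{q}\sum_{\mu:\lambda=k_{j}} Y_{k_{j},\mu}^{2}(x)\, d\xi(x) = \frac{s}{\tilde{\Omega}}.
$$
The key ingredient is the degree-$k$ \emph{diagonal identity} $\sum_{\mu:\lambda=k} Y_{k,\mu}^{2}(x) = s(k)/\tilde{\Omega}$ on $\S_{m}$. I would prove it by invariance: the space $\mathcal{H}_{k}$ of degree-$k$ hyperspherical harmonics is an $O(m)$-invariant subspace of $L^{2}(\S_{m},d\Omega_{m})$ of dimension $s(k)$, so the function $x\mapsto \sum_{\mu} Y_{k,\mu}^{2}(x)$, which is the diagonal of the reproducing kernel of $\mathcal{H}_{k}$, is $O(m)$-invariant and therefore constant on the orbit $\S_{m}$. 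Its value is pinned down by integrating against $d\Omega_{m}/\tilde{\Omega}$ and invoking \eqref{orthonormal}. This is essentially the classical addition theorem for hyperspherical harmonics.

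Part (iii) then drops out of Ky Fan's minimax characterisation: for any rank-$s$ orthogonal projection $P$ one has $\sum_{i=1}^{s}\lambda_{i}(M(\xi)) \le \operatorname{tr}(PM(\xi)P)$, and taking $P = KK^{T}$ (permissible since $K$ has orthonormal columns) yields $\Phi_{E_{s}}(\xi) \le s/\tilde{\Omega} = \Phi_{E_{s}}(\xi^{*})$.

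For part (ii) I would chain three ingredients. First, the Schur complement inequality gives $C_{K}(\xi) \preceq K^{T}M(\xi)K =: N(\xi)$ in the Loewner order. Second, $\Phi_{p}$ is isotonic on the positive definite cone for every $p\in[-\infty,1)$ by Weyl's eigenvalue monotonicity combined with the (anti)monotonicity of $t\mapsto t^{p}$, so $\Phi_{p}(C_{K}(\xi))\le \Phi_{p}(N(\xi))$. Third, among $s\times s$ positive semidefinite matrices with trace fixed at $s/\tilde{\Omega}$, $\Phi_{p}$ attains its maximum at the scalar matrix $I_{s}/\tilde{\Omega}$: this is immediate from Schur concavity of $\lambda\mapsto \sum_{i}\lambda_{i}^{p}$ for $p\in(0,1)$, Schur convexity together with the sign reversal of $1/p$ for $p<0$, the AM--GM bound for $p=0$, and $\lambda_{\min}\le \operatorname{tr}/s$ for $p=-\infty$. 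Chaining these three inequalities produces $\Phi_{p}(\xi)\le \Phi_{p}(I_{s}/\tilde{\Omega}) = \Phi_{p}(\xi^{*})$, finishing the proof.
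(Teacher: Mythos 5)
Your proof is correct, but for parts (ii) and (iii) it takes a genuinely different route from the paper. Both arguments hinge on the same central fact --- the addition theorem (``sum rule'') $\sum_{\mu}Y_{k,\mu}^{2}(x)=s(k)/\tilde{\Omega}$, which the paper quotes from Avery and Wen as equation \eqref{sum-rule} and which you rederive via $O(m)$-invariance of the reproducing kernel of the degree-$k$ eigenspace --- but they deploy it differently. The paper verifies the pointwise sufficient condition of the general equivalence theorem (Pukelsheim, Section 7.20) for $p>-\infty$, disposes of $p=-\infty$ separately via Lemma 8.15 of Pukelsheim, and for part (iii) invokes Harman's subgradient equivalence theorem with $\Gamma=KK^{T}$; in each case the condition collapses to the same inequality \eqref{get2}, which the sum rule turns into the identity $s=s$. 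You instead prove a global bound valid for \emph{every} competing design: the sum rule forces $\operatorname{tr}(K^{T}M(\xi)K)=s/\tilde{\Omega}$ identically in $\xi$, after which Ky Fan's minimum principle settles $\Phi_{E_s}$-optimality at once, and the chain $C_{K}(\xi)\preceq K^{T}M(\xi)K$ (Gauss--Markov), isotonicity of $\Phi_{p}$, and the fact that a trace-constrained $\Phi_{p}$ is maximised by the scalar matrix settles $\Phi_{p}$-optimality uniformly in $p\in[-\infty,1)$ with no separate limiting argument for $p=-\infty$. Your version is more self-contained and makes the ``constant trace'' mechanism behind the simultaneous optimality transparent; the paper's version is the standard design-theoretic idiom and produces an explicit equivalence-theorem certificate, which the authors reuse in Section \ref{sec4} to check optimality for the symmetrized harmonics of point group $1$. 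All three auxiliary facts you invoke (Ky Fan, the Loewner bound $C_K(\xi)\preceq K^TM(\xi)K$ via $L=K^T$ with $LK=I_s$, and the Schur-type extremality of the scalar matrix) are standard and correctly applied.
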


\begin{proof}  We note that the explicit expression for the normalising constant $\tilde{\Omega}$ in \eqref{constant}
is given in equation (30) in \cite{wenavery1985}.  Let $\xi^*$ denote the design corresponding to the density defined by \eqref{opt-design} and \eqref{constant}. Then due to the orthonormality property of the real hyperspherical harmonics, given in equation \eqref{orthonormal}, it follows that
\begin{equation}\label{optinfo}
M(\xi^*)  = \frac{1}{\tilde{\Omega}} I_{D} ,
\end{equation}
where $\tilde{\Omega}$ is defined in equation \eqref{constant}. This proves part (i) of the Theorem. \\
For a proof of (ii) let $p>-\infty$.  According to the general equivalence theorem in \cite{pukelsheim}, Section 7.20, the measure $\xi^*$ is $\Phi_p$-optimal if and only if the inequality
\begin{equation}\label{get}
f_d^T (\boldsymbol{\theta_{m-2}} , \phi) \tilde{\Omega} K \Big (K^T \tilde{\Omega}K\Big)^{-p-1} K^T \tilde{\Omega} f_d
(\boldsymbol{\theta_{m-2}} , \phi)  \leq \text{tr}\Big\{\Big (K^T \tilde{\Omega}K\Big)^{-p}\Big \} ,
\end{equation}
holds for all $\boldsymbol{\theta_{m-2}}  \in [0,\pi]^{m-2} $  and $\phi \in [-\pi,\pi]$.

From the definition of the matrix $K$ given in equations \eqref{K} and \eqref{K-more} we have that $K^T K = I_s$ where $s=\sum_{j=0}^q s(k_j)$ and $s(k_j)$ is given in \eqref{s-def}. Therefore, condition \eqref{get} reduces to
\begin{equation}\label{get2}
s \geq \sum_{j=0}^q \sum_{\mu_1=0}^{k_j} \ldots \sum_{\mu_{m-3}=0}^{\mu_{m-4}} \sum_{\mu_{m-2}=-\mu_{m-3}}^{\mu_{m-3}} \tilde{\Omega} \big (Y_{k_j, \boldsymbol{\mu_{m-3}}, \mu_{m-2}}(\boldsymbol{\theta_{m-2}} , \phi) \big )^2.
\end{equation}
Now the right-hand side can be simplified observing the sum rule for real hyperspherical harmonics, that is
\begin{equation}\label{sum-rule}
\sum_{\mu_1=0}^\lambda \ldots \sum_{\mu_{m-2}=-\mu_{m-3}}^{\mu_{m-3}} \Big( Y_{\lambda, \boldsymbol{\mu_{m-3}}, \mu_{m-2}}(\boldsymbol{\theta_{m-2}}, \phi) \Big)^2 = \frac{(m+2 \lambda-2)(m-4)!!(\lambda+m-3)!}{N_m \lambda! (m-3)!} ,
\end{equation}
where the constant $N_m$ is given by
\begin{equation}\label{nm}
N_m = \frac{2(m-2)!!\pi^{m/2}}{\Gamma\left( \frac{m}{2} \right)} = \begin{cases}
                        (2\pi)^{m/2} &\quad \text{$m$ is even} \\
                        
                        2(2\pi)^{(m-2)/2} &\quad \text{$m$ is odd}
                    \end{cases}
,
\end{equation}
(see \cite{averywen1982}).
Therefore,
 the right-hand side of \eqref{get2} becomes
\begin{equation*}
\sum_{j=0}^q \frac{N_m}{(m-2)!!} \frac{(m+2k_j-2)(m-4)!!(k_j+m-3)!}{N_m k_j! (m-3)!} = \sum_{j=0}^q \frac{(m+2k_j-2)(k_j+m-3)!}{k_j! (m-2)!} = 
s,
\end{equation*}
where the last equality follows from the definition of $s$ in \eqref{sdeff}.
Consequently, the right-hand side and left-hand side of \eqref{get2} coincide, which proves that the design $\xi^*$ corresponding to the density defined by \eqref{opt-design} and \eqref{constant} is $\Phi_p$-optimal for any $p \in (-\infty,1)$ and any matrix $K$ of the form \eqref{K} and \eqref{K-more}. 
The remaining case $p=-\infty$ follows from Lemma 8.15 in \cite{pukelsheim}, which completes the proof of part (ii).  \\
For a proof of  part (iii)  let $\mbox{diag}({\gamma_1, \ldots , \gamma_D })$ denote a diagonal matrix with entries $\gamma_1, \ldots , \gamma_D $ and
let 
 \begin{equation}
  \partial\Phi_{E_s}(\xi^*) = \Big  \{ \mbox{diag}({\gamma_1, \ldots , \gamma_D })  \in  \mathbb{R}^{D \times D}
  \Big | ~\gamma_1, \ldots, \gamma_D  \in [0, 1], \sum_{k=1}^D\gamma_k = s \big \},
 \end{equation}
denote the subgradient of  $\Phi_{E_s}$. Then it follows from  Theorem 4   of \cite{harman2004},  that the design $\xi^*$ is $\Phi_{E_s}$-optimal
 if and only if there exists a matrix $\Gamma \in \partial\Phi_{E_s}(\xi^*)$ such that the inequality 
\begin{equation}\label{getes}
f_d^T (\boldsymbol{\theta_{m-2}} , \phi) \Gamma f_d
(\boldsymbol{\theta_{m-2}} , \phi)  \leq \sum_{k=1}^s\lambda_k(M(\xi^*)),
\end{equation}
holds for all $\boldsymbol{\theta_{m-2}} \in [0, \pi]^{m-2}$ and $\phi\in [-\pi, \pi]$. \\
We now set  $\Gamma = KK^T$ where $K$ is defined by the equations \eqref{K} and \eqref{K-more}. 
Therefore  $\Gamma$ is a diagonal matrix with entries $0$ or $1$, and  
$$\text{tr}(\Gamma) = \mbox{tr}(KK^T) = \mbox{tr}(K^TK) = \text{tr}(I_s) = s,$$
that is, the matrix $\Gamma$ is contained in the subgradient $\partial\Phi_{E_s}(\xi^*)$. 
Using this matrix  in \eqref{getes} the left-hand side of the inequality reduces to 
$$ f_d^T (\boldsymbol{\theta_{m-2}} , \phi) \Gamma f_d
(\boldsymbol{\theta_{m-2}} , \phi) =
\sum_{j=0}^q \sum_{\mu_1=0}^{k_j} \ldots \sum_{\mu_{m-3}=0}^{\mu_{m-4}} \sum_{\mu_{m-2}=-\mu_{m-3}}^{\mu_{m-3}} \big (Y_{k_j, \boldsymbol{\mu_{m-3}}, \mu_{m-2}}(\boldsymbol{\theta_{m-2}} , \phi) \big )^2,$$ 
and  part (i) yields for the right hand side of the inequality
$$\sum_{k=1}^s\lambda_k(M(\xi^*))= \tfrac{s}{\tilde\Omega},$$
where $\tilde\Omega$ is defined by \eqref{constant}. Consequently, 
the inequality  \eqref{getes} is equivalent to \eqref{get2},  which has been proved in the proof of 
part (ii). This completes the proof of  Theorem \ref{optimal-fourier}.
\end{proof}

\subsection{Discrete $\Phi_p$- and $\Phi_{E_s}$- optimal designs} \label{sec32}

While the result of the previous section provides a very elegant solution to the $\Phi_p$-optimal design problem from a mathematical point of view, the derived designs $\xi^*$
cannot be directly implemented as the optimal probability measure is absolute continuous. In practice, if   $n \in \N$ observations are available  to estimate the parameters in the linear regression  model  \eqref{reg-model}, one has  to specify a number, say $k$,  of  different points
$(\boldsymbol{\theta_{m-2}^1}, \phi^1), \ldots , (\boldsymbol{\theta_{m-2}^k}, \phi^k) \in [0, \pi]^{m-2} \times [-\pi, \pi]$ defining by \eqref{polar} the locations on the sphere where observations should be taken, and relative frequencies $n_j /n$ defining the proportion of observations taken at each point $(\sum_{j=1}^k n_j=n$).
The  maximisation of the function  \eqref{2.4} in the class of all measures of this type
yields a non-linear and non-convex discrete optimisation problem, which is usually intractable. 

Therefore, for the construction of optimal or (at least) efficient designs we proceed as follows.  Due to Caratheodory's theorem [see, for example, \cite{silvey}] there always exists a probability measure $\xi$ on the set  $[0,\pi]^{m-2} \times [-\pi,\pi]$ with at most $D(D+1)/2$  
support points such that the information matrices of $\xi$ and $\xi^*$ coincide, that is,
\begin{equation}\label{condition-discrete}
M(\xi)=M(\xi^*)=\frac{1}{\tilde{\Omega}} I_{D}.
\end{equation}
We now   identify such a design $\xi$ assigning at the points $\{ (\boldsymbol{\theta_{m-2}^j}, \phi^j) \}_{j=1}^k$ the weights  $\{ \boldsymbol{\omega^j} \}_{j=1}^k = \{ (\omega_1^j, \omega_2^j, \ldots, \omega_{m-2}^j, \omega_\phi^j) \}_{j=1}^k$ such that the identity  \eqref{condition-discrete} is satisfied, where we simultaneously
try to keep the number $k$ of support points ``small''. The numbers $n_j$ specifying the numbers of repetitions at the different
experimental conditions in the concrete experiment are finally obtained by rounding the numbers $n\boldsymbol{\omega^j} $ to integers [see, for example,
\cite{pukrie1992}].  We begin with an auxiliary result about Gauss quadrature which is of independent interest and is proven in the appendix.

\begin{lemma}\label{lemma}
Let $a$ be a positive and integrable weight function on the interval $[-1,1]$ with 
$
\tilde{a} = \int_{-1}^1 a(x) \,dx ,
$
and  let $-1 \leq x_1 < x_2 < \ldots < x_r \leq 1$ denote $r\in \mathbb{N} $ points with corresponding
 positive weights $\omega_1, \ldots, \omega_r$ ($\sum_{j=1}^r \omega_j = 1$). 
 Then the points $x_i$ and weights $\omega_i$ generate a quadrature formula of degree $z \geq r$, that is 
\begin{equation}\label{quad}
\int_{-1}^1 a(x) x^{\ell} \,dx = \tilde{a} \sum_{j=1}^r \omega_j x_j^{\ell}, \qquad \ell=0, \ldots, z,
\end{equation}
 if and only if the following two conditions are satisfied:
\begin{enumerate}
\item[(A)] The polynomial $V_r(x) = \prod_{j=1}^r (x-x_j)$ is orthogonal with respect to the weight function $a(x)$ to all polynomials of degree $z-r$, that is,
\begin{equation}\label{cond1}
\int_{-1}^1 V_r(x) a(x) x^{\ell} \,dx =0, \qquad \ell=0, \ldots, z-r.
\end{equation}
\item[(B)] The weights $\omega_j$ are given by
\begin{equation}\label{cond2}
\omega_j = \frac{1}{\tilde{a}} \int_{-1}^1 a(x) \ell_j(x) \,dx  \quad j=1,\ldots,r,
\end{equation}
where $\ell_j(x) = \prod_{k=1,k \neq j}^r \frac{x-x_k}{x_k-x_j}$ denotes the $j$th Lagrange interpolation polynomial with nodes $x_1,\ldots,x_r$.
\end{enumerate}
\end{lemma}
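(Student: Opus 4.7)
The plan is to establish both directions of this characterization using polynomial division by $V_r$ and the Lagrange interpolation identity on the $r$ nodes $x_1,\ldots,x_r$.

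For the necessity direction, I would assume the quadrature formula \eqref{quad} is exact for every monomial of degree at most $z$, and hence for every polynomial of degree at most $z$. To obtain (B), I would test \eqref{quad} on the specific polynomial $\ell_j(x)$, which has degree $r-1 \leq z$. Since $\ell_j(x_k) = \delta_{jk}$, the right-hand side collapses to $\tilde{a}\omega_j$, while the left-hand side is $\int_{-1}^1 a(x)\ell_j(x)\,dx$; this yields \eqref{cond2}. To obtain (A), I would test \eqref{quad} on $V_r(x) x^{\ell}$ for $\ell = 0,\ldots,z-r$; this is a polynomial of degree at most $z$, so the formula applies, but the right-hand side vanishes because $V_r(x_j)=0$ for every node.

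For the sufficiency direction, suppose (A) and (B) hold, and let $P$ be an arbitrary polynomial of degree at most $z$. I would perform Euclidean division by $V_r$ to write
\begin{equation*}
P(x) = V_r(x)Q(x) + R(x), \qquad \deg Q \leq z-r, \qquad \deg R \leq r-1.
\end{equation*}
Integrating against $a$ and invoking the orthogonality condition (A) term-by-term in the monomial expansion of $Q$ gives $\int_{-1}^1 a(x)P(x)\,dx = \int_{-1}^1 a(x)R(x)\,dx$. Because $R$ has degree at most $r-1$, it equals its Lagrange interpolant through the $r$ nodes, namely $R(x) = \sum_{j=1}^r R(x_j)\ell_j(x)$. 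Inserting this and applying (B) converts the remaining integral into $\tilde{a}\sum_{j=1}^r \omega_j R(x_j)$. Finally, since $V_r(x_j)=0$, the division relation gives $R(x_j)=P(x_j)$, so the sum equals $\tilde{a}\sum_{j=1}^r \omega_j P(x_j)$, proving \eqref{quad} for $P$ and hence for all monomials $x^\ell$ with $\ell \leq z$.

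The argument is entirely classical and has no real obstacle beyond bookkeeping; the only mildly delicate point is verifying that the remainder $R$ in the polynomial division has degree strictly less than $r$, so that Lagrange interpolation through exactly $r$ nodes recovers it uniquely. This is what forces the use of $V_r$ (a polynomial of degree exactly $r$ with leading coefficient $1$) as the divisor, and it is the reason condition (A) is required only up to degree $z-r$ rather than $z$. I would state this divisibility step explicitly before invoking interpolation, then the two chains of equalities close the proof cleanly in both directions.
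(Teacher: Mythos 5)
Your proof is correct and follows essentially the same route as the paper: Euclidean division by $V_r$ combined with the Lagrange interpolation identity $R(x)=\sum_j R(x_j)\ell_j(x)$ for the sufficiency direction, and testing the quadrature rule on $\ell_j(x)$ and on $V_r(x)x^{\ell}$ for the necessity direction. The only cosmetic difference is that you make explicit the passage from exactness on monomials to exactness on all polynomials of degree at most $z$ by linearity, which the paper leaves implicit.
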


In the following, we use Lemma \ref{lemma} for $z=2d$ and the weight function 
$$a(x)=(1-x^2)^{{(m-i-2)}/{2}}.
$$
Note that the   Gegenbauer polynomials $C_r^{(m-i-1)/2}(x)$ are orthogonal with respect to the
weight function $a(x)=(1-x^2)^{{(m-i-2)}/{2}}$ on the interval $[-1,1]$ [see \cite{andrews}, {p. 302}]. Hence the $r$ roots of $C_r^{(m-i-1)/2}(x)$ have multiplicity $1$, are real and located in the interval $(-1,1)$. As condition \eqref{cond1} is satisfied for  $a(x)=(1-x^2)^{{(m-i-2)}/{2}}$, they define together with the corresponding (positive) weights in \eqref{cond2}   a Gaussian quadrature formula.
Therefore, it follows that for any $r \in \{ d+1, \ldots, 2d \}$ there exists at least one quadrature formula $\{ x_i^j, \omega_i^j \}_{j=1}^r$ for every $i=1, \ldots, m-2$, such that \eqref{quad} holds with $a(x)=(1-x^2)^{{(m-i-2)}/{2}}$.
We consider quadrature formulas of this type and define the designs
\begin{equation}\label{design-theta}
\zeta_i = \begin{pmatrix} \theta_1^i & \ldots & \theta_r^i \\ \omega_1^i & \ldots & \omega_r^i \end{pmatrix} ,
\end{equation}
on $[0,\pi]$, where
\begin{equation}\label{discrete-thetas}
\theta_j^i = \arccos x_j^i \qquad i=1, \ldots, m-2;\quad j=1, \ldots, r.
\end{equation}
Similarly we define for any $t \in \mathbb{N}$ and any $\beta \in (-\frac{t+1}{t} \pi, - \pi]$ a design $\nu=\nu (\beta,t)$ on the interval $[-\pi,\pi]$ by
\begin{equation}\label{nu}
\nu = \nu (\beta,t) = \begin{pmatrix} \phi^1 & \ldots & \phi^t \\ \frac{1}{t} & \ldots & \frac{1}{t} \end{pmatrix} ,
\end{equation}
where the points $\phi^j$ are given by
\begin{equation}\label{discrete-phi}
\phi^j = \beta + \frac{2 \pi j}{t}, \qquad j=1,\ldots,t.
\end{equation}
The following theorem shows that designs of the form
\begin{equation}
\label{discropt}
\zeta_1 \otimes \ldots \otimes \zeta_{m-2} \otimes \nu,
\end{equation}
are $\Phi_p$- as well as $\Phi_{E_s}$-optimal designs. 

\begin{theorem}\label{optimal-discrete-fourier}
Let $p \in [-\infty,1)$, $0\leq k_0 <k_1 < \ldots < k_q \leq d$ and $K$ be a matrix defined by \eqref{K} and \eqref{K-more}. For any $t \geq 2d+1$ and any $r \in \{ d+1, \ldots, 2d \}$, the design $\zeta_1 \otimes \ldots \otimes \zeta_{m-2} \otimes \nu$
defined in  \eqref{discropt} is $\Phi_p$-optimal for estimating the coefficients $K^T {\bm c}$. \\
Moreover, if $s=\sum_{i=0}^q s(k_j)$ is  the number of considered hyperspherical harmonics defined in \eqref{sdeff}, then for any   $t \geq 2d+1$ and any $r \in \{ d+1, \ldots, 2d \}$,
 the design $\zeta_1 \otimes \ldots \otimes \zeta_{m-2} \otimes \nu$ defined in  \eqref{discropt} is $\Phi_{E_s}$-optimal.
\end{theorem}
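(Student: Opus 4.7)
The plan is to reduce Theorem \ref{optimal-discrete-fourier} to Theorem \ref{optimal-fourier} by showing that the product design $\zeta_1\otimes\cdots\otimes\zeta_{m-2}\otimes\nu$ has exactly the same information matrix as $\xi^*$, namely $\frac{1}{\tilde\Omega}I_D$. Since both the $\Phi_p$- and the $\Phi_{E_s}$-criteria depend on a design only through its information matrix, this equality of matrices immediately transfers both optimality statements from $\xi^*$ to the discrete design.

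By the product form \eqref{real-def} of the real hyperspherical harmonics and the product form of the design, each entry $M_{\alpha\beta}$ of the information matrix factorises into the product of $m-2$ one-dimensional discrete $\theta_i$-sums and one discrete $\phi$-sum. For the $\phi$-factor I would invoke the standard fact that $t$ equispaced points on $[-\pi,\pi]$ evaluate every trigonometric polynomial of degree strictly below $t$ exactly; since $|\mu_{m-2}|,|\mu'_{m-2}|\leq d$, the cross products $\cos(\mu_{m-2}\phi)\cos(\mu'_{m-2}\phi)$, $\sin(\mu_{m-2}\phi)\sin(\mu'_{m-2}\phi)$ and $\cos(\mu_{m-2}\phi)\sin(\mu'_{m-2}\phi)$ are trigonometric polynomials of degree at most $2d<t$, so the discrete $\phi$-sum matches the continuous $\phi$-integral.

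For each $\theta_i$-factor I would apply Lemma \ref{lemma}. The substitution $x=\cos\theta_i$ turns the measure $(\sin\theta_i)^{m-i-1}d\theta_i$ into $(1-x^2)^{(m-i-2)/2}dx$, which is precisely the weight for which $C_n^{(m-i-1)/2}$ are orthogonal; by construction $\zeta_i$ is the $r$-point Gauss formula for this weight, which (because $r\geq d+1$) is exact on every polynomial of degree $\leq 2d$. Whenever $\mu_i+\mu'_i$ is \emph{even}, the substituted integrand $C_{n_i}^{\mu_i+(m-i-1)/2}(x)\,C_{n'_i}^{\mu'_i+(m-i-1)/2}(x)\,(1-x^2)^{(\mu_i+\mu'_i)/2}$ is a polynomial of degree $\mu_{i-1}+\mu'_{i-1}\leq 2d$, so Gauss exactness reproduces the continuous $\theta_i$-integral. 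In particular this covers every diagonal entry $\alpha=\beta$ and yields $1/\tilde\Omega$ there via \eqref{orthonormal}.

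The main obstacle is the off-diagonal case $\alpha\neq\beta$ with $\mu_i+\mu'_i$ odd for some $i$, where the factor $(1-x^2)^{(\mu_i+\mu'_i)/2}$ is not polynomial in $x$ and Gauss exactness is not directly available. My plan is a short parity argument: the zeros of $C_r^{(m-i-1)/2}$ are symmetric about $0$ with equal weights, so under $\theta_i\mapsto \pi-\theta_i$ the discrete $\theta_i$-factor picks up the sign $(-1)^{n_i+n'_i}$ and therefore vanishes whenever $n_i+n'_i$ is odd. Using the identity $n_i+n'_i=(\mu_{i-1}+\mu'_{i-1})-(\mu_i+\mu'_i)$ inductively, if no index $i$ makes $n_i+n'_i$ odd then all sums $\mu_i+\mu'_i$ share the parity of $\lambda+\lambda'$; if that common parity is odd then $|\mu_{m-2}|\neq|\mu'_{m-2}|$ and the $\phi$-factor already vanishes, while if it is even then every $\mu_i+\mu'_i$ is even and the preceding paragraph applies to give the continuous value, which equals $0$ by \eqref{orthonormal} since $\alpha\neq\beta$. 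Combining the cases yields $M(\zeta_1\otimes\cdots\otimes\zeta_{m-2}\otimes\nu)=\frac{1}{\tilde\Omega}I_D$, and Theorem \ref{optimal-discrete-fourier} then follows directly from Theorem \ref{optimal-fourier}.
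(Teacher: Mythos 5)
Your overall strategy is the same as the paper's: reduce the theorem to the single identity $M(\zeta_1\otimes\cdots\otimes\zeta_{m-2}\otimes\nu)=\frac{1}{\tilde\Omega}I_{D}$ (which transfers both optimality statements from Theorem \ref{optimal-fourier}, since $\Phi_p$ and $\Phi_{E_s}$ depend on the design only through its information matrix), factorize each matrix entry into one $\phi$-sum and $m-2$ one-dimensional $\theta_i$-sums, handle the $\phi$-part by exactness of equispaced nodes for trigonometric polynomials of degree $<t$, and invoke degree-$2d$ exactness of the $\zeta_i$ for the polynomial integrands. Where you genuinely diverge is the treatment of cross terms containing $(1-x^2)^{(\mu_i+\mu_i')/2}$ with $\mu_i+\mu_i'$ odd. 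The paper never meets these: it argues sequentially through \eqref{eq1}, \eqref{eq2}, \eqref{eq3} (a ``backward induction''), noting that the $\phi$-factor vanishes unless $\mu_{m-2}=\mu_{m-2}'$, that given this the $\theta_{m-2}$-integrand is a genuine polynomial whose exact value vanishes unless $\mu_{m-3}=\mu_{m-3}'$, and so on; at every stage the equality of the inner indices forces the power of $\sin\theta_i$ to be even, so only quadrature exactness and classical orthogonality are used. You instead kill these factors by reflecting the nodes about $x=0$.

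That reflection step is where your argument has a gap relative to the stated theorem. The designs \eqref{design-theta} are built from \emph{any} quadrature formula satisfying \eqref{quad} with $z=2d$ and $r\in\{d+1,\ldots,2d\}$ positive-weight nodes, and such formulas need not have node sets symmetric about the origin: already for $r=d+1$ one may take $V_r=P_r+cP_{r-1}$ with small $c\neq 0$ (quasi-orthogonal polynomials), which satisfies condition (A) of Lemma \ref{lemma}, has all zeros in $(-1,1)$ with positive weights, but is neither even nor odd. Your parity cancellation therefore establishes the result only for symmetric formulas such as the Gauss--Gegenbauer ones, while the paper's nested argument covers the whole class. Two further points. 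First, the implication ``common parity odd $\Rightarrow|\mu_{m-2}|\neq|\mu_{m-2}'|$'' is false as stated: take $\mu_{m-3}=2$, $\mu_{m-3}'=1$, $\mu_{m-2}=\mu_{m-2}'=1$; the $\phi$-factor does not vanish there, and it is instead the $\theta_{m-2}$-factor (whose integrand $P_{\mu_{m-3}}^{|\mu_{m-2}|}P_{\mu_{m-3}'}^{|\mu_{m-2}|}$ is a polynomial of odd degree $\leq 2d$) that is annihilated by exactness. The implication becomes correct only if you extend the parity chain through the associated Legendre factor, i.e.\ include $n_{m-2}=\mu_{m-3}-|\mu_{m-2}|$ with $|\mu_{m-2}|$ in place of $\mu_{m-2}$. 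Second, both defects disappear if you replace the reflection argument by the paper's observation that the innermost mismatched index pair already produces a vanishing \emph{polynomial} factor via exactness plus orthogonality, so no symmetry of the nodes is ever needed.
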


\begin{proof}
The assertion can be established by showing the identity
\begin{equation}\label{discrete-optimality}
M(\zeta_1 \otimes \ldots \otimes \zeta_{m-2} \otimes \nu) = \frac{1}{\tilde{\Omega}} I_{D} ,
\end{equation}
where the dimension $D$ is defined in \eqref{D}.  Let
\begin{align*}
\psi (\phi) &= (\psi_{-d}(\phi), \psi_{-d+1}(\phi), \ldots, \psi_{d}(\phi))^T \\
& = (\sqrt{2} \sin (d\phi), \dots, \sqrt{2} \sin (\phi), 1, \sqrt{2} \cos (\phi), \ldots, \sqrt{2} \cos (d\phi))^T.
\end{align*}
Then the real hyperspherical harmonics defined in \eqref{real-def}, \eqref{real-constant1} and \eqref{real-constant2} can be rewritten as
\begin{eqnarray*}
Y_{\lambda, \boldsymbol{\mu_{m-3}}, \mu_{m-2}}(\boldsymbol{\theta_{m-2}}, \phi) 
&=& \prod_{i=1}^{m-3} \tilde{\gamma}_{\mu_{i-1},\mu_i} \prod _{i=1}^{m-3} \Big[ C_{\mu_{i-1}-\mu_i}^{\mu_i+\frac{m-i-1}{2}}  ( \cos \theta_i  ) (\sin \theta_i)^{\mu_i} \Big]  \\
&& \times 
 \gamma_{\mu_{m-3},\mu_{m-2}} P_{\mu_{m-3}}^{|\mu_{m-2}|}  ( \cos \theta_{m-2} ) \psi_{\mu_{m-2}}(\phi) ,
\end{eqnarray*}
where the constants $\tilde{\gamma}_{\mu_{i-1},\mu_i} $ and  $\gamma_{\mu_{m-3},\mu_{m-2}}$ are defined by
\begin{equation*}
\tilde{\gamma}_{\mu_{i-1},\mu_i} = \Big[\frac{2^{2\mu_i + m - i -3}(\mu_{i-1}-\mu_i)! (2\mu_{i-1}+m-i-1) \Gamma^2(\mu_i +\frac{m-i-1}{2})}{\pi(\mu_{i-1}+\mu_i+m-i-2)!}\Big]^{1/2} ,
\end{equation*}
and
\begin{equation*}
\gamma_{\mu_{m-3},\mu_{m-2}} = \Big[ \frac{(2\mu_{m-3}+1)(\mu_{m-3}-|\mu_{m-2}|)!}{4 \pi (\mu_{m-3}+|\mu_{m-2}|)!} \Big]^{1/2} .
\end{equation*}

Therefore, the identity \eqref{discrete-optimality} is equivalent to the system of equations
\begin{align*}
& \int Y_{\lambda, \boldsymbol{\mu_{m-3}}, \mu_{m-2}}(\boldsymbol{\theta_{m-2}}, \phi) Y_{\lambda', \boldsymbol{\mu_{m-3}}', \mu_{m-2}'}(\boldsymbol{\theta_{m-2}}, \phi)
\,d ( \zeta_1 \otimes  \ldots   \otimes \zeta_{m-2}) (\boldsymbol{\theta_{m-2}}) \,d \nu(\phi) 
\\
& = \prod_{i=1}^{m-3} \tilde{\gamma}_{\mu_{i-1},\mu_i} \prod_{i=1}^{m-3} \tilde{\gamma}_{\mu_{i-1}',\mu_i'} \gamma_{\mu_{m-3},\mu_{m-2}} \gamma_{\mu_{m-3}',\mu_{m-2}'}  \\ 
& \times \int_{-\pi}^{\pi} \int_{0}^{\pi} \ldots \int_{0}^{\pi}  \prod _{i=1}^{m-3} \Big[ C_{\mu_{i-1}-\mu_i}^{\mu_i+\frac{m-i-1}{2}}  ( \cos \theta_i  ) (\sin \theta_i)^{\mu_i} \Big] 
P_{\mu_{m-3}}^{|\mu_{m-2}|}  ( \cos \theta_{m-2}  ) \psi_{\mu_{m-2}}(\phi) \\
&  \prod _{i=1}^{m-3} \Big[ C_{\mu_{i-1}'-\mu_i'}^{\mu_i'+\frac{m-i-1}{2}}  ( \cos \theta_i  ) (\sin \theta_i)^{\mu_i'} \Big] P_{\mu_{m-3}'}^{|\mu_{m-2}'|}  ( \cos \theta_{m-2}  ) \psi_{\mu_{m-2}'}(\phi) 
\,d \zeta_1(\theta_1) \ldots \,d \zeta_{m-2}(\theta_{m-2}) \,d \nu(\phi) \\
&= \frac{1}{\tilde{\Omega}} \delta_{\lambda \lambda'} \delta_{\mu_1 \mu_1'} \ldots \delta_{\mu_{m-2} \mu_{m-2}'} ,
\end{align*}
where
\begin{align*}
\lambda,\lambda'&= 0, \ldots, d; ~ \mu_1=0,\ldots,\lambda; ~\ldots ~ ; ~\mu_{m-3}=0,\ldots, \mu_{m-4}; ~\mu_{m-2}=-\mu_{m-3}, \ldots, \mu_{m-3};  \\
\mu_1'&=0,\ldots,\lambda'~; \ldots  ~;  ~\mu_{m-3}'=0,\ldots, \mu_{m-4}'; ~\mu_{m-2}'=-\mu_{m-3}', \ldots, \mu_{m-3}'.
\end{align*}
Note that
\begin{align*}
\tilde{\Omega}
&= 2 \pi \prod_{i=1}^{m-2} \int_{0}^{\pi} (\sin \theta_i)^{m-i-1} \,d \theta_i = 2 \pi \prod_{i=1}^{m-2} \int_{-1}^1 (1-x^2)^{\frac{m-i-2}{2}} \,dx ,
\end{align*}
 and that $a(x)= (1-x^2)^{\frac{m-i-2}{2}} $ is the weight function defining each of the quadrature formulas for $i=1, \ldots, m-2$. 

Consequently, by  Fubini's theorem the system above  
is satisfied if the following equations hold
\begin{equation}\label{eq1}
\int_{-\pi}^{\pi} \psi_{\mu_{m-2}}(\phi) \psi_{\mu_{m-2}'}(\phi) \,d \nu(\phi) = \delta_{\mu_{m-2} \mu_{m-2}'} ,
\end{equation}
($\mu_{m-2}, \mu_{m-2}' = -d, \ldots, d$)
\begin{align}\label{eq2}
&\gamma_{\mu_{m-3},\mu_{m-2}} \gamma_{\mu_{m-3}',\mu_{m-2}'} \int_0^{\pi} P_{\mu_{m-3}}^{|\mu_{m-2}|} ( \cos \theta_{m-2} ) P_{\mu_{m-3}'}^{|\mu_{m-2}'|} ( \cos \theta_{m-2}  ) \,d \zeta_{m-2}(\theta_{m-2})  \nonumber \\
&= \frac{1}{2\pi \int_{-1}^1 1 \,dx} \delta_{\mu_{m-3} \mu_{m-3}'} \delta_{\mu_{m-2} \mu_{m-2}'} ,
\end{align}
($\mu_{m-3}, \mu_{m-3}' = 0,\ldots,d; \mu_{m-2}=0,\ldots,\mu_{m-3}; \mu_{m-2}'=0,\ldots,\mu_{m-3}'$) and for each $i=1,\ldots,m-3$
\begin{align}\label{eq3}
&\tilde{\gamma}_{\mu_{i-1},\mu_i} \tilde{\gamma}_{\mu_{i-1}',\mu_i'} \int_0^{\pi} C_{\mu_{i-1}-\mu_i}^{\mu_i+\frac{m-i-1}{2}}  ( \cos \theta_i  ) (\sin \theta_i)^{\mu_i} C_{\mu_{i-1}'-\mu_i'}^{\mu_i'+\frac{m-i-1}{2}}  ( \cos \theta_i  ) (\sin \theta_i)^{\mu_i'} \,d \zeta_i(\theta_i) \nonumber \\
& = \frac{1}{\int_{-1}^1 (1-x^2)^{\frac{m-i-2}{2}} \,dx} \delta_{\mu_{i-1} \mu_{i-1}'} \delta_{\mu_i \mu_i'} ,
\end{align}
($\mu_{i-1}, \mu_{i-1}' = 0,\ldots,d; \mu_{i}=0,\ldots,\mu_{i-1}; \mu_{i}'=0,\ldots,\mu_{i-1}'$).

It is well known [see \cite{pukelsheim}] that equation \eqref{eq1} is satisfied for measures of the form \eqref{nu}. Hence in what follows we can restrict ourselves to the case $\mu_{m-2}=\mu_{m-2}'$.

Now the integrand in equation \eqref{eq2} is a polynomial of degree $\mu_{m-3} + \mu_{m-3}' \leq 2d$. Furthermore, since $\zeta_{m-2}$ corresponds to a quadrature formula for $a(x)=1$ that integrates polynomials of degree $2d$ exactly, we have from Lemma \ref{lemma} for $z=2d$ and $a(x)=1$ that
\begin{align*}
&\int_0^{\pi} P_{\mu_{m-3}}^{|\mu_{m-2}|} ( \cos \theta_{m-2}  ) P_{\mu_{m-3}'}^{|\mu_{m-2}'|} ( \cos \theta_{m-2}  ) \,d \zeta_{m-2}(\theta_{m-2})  \\
& = \sum_{j=1}^r \omega_{m-2}^{j} P_{\mu_{m-3}}^{|\mu_{m-2}|} ( x_{m-2}^{j}  ) P_{\mu_{m-3}'}^{|\mu_{m-2}|} ( x_{m-2}^{j}  ) = \frac{1}{2} \int_{-1}^1  P_{\mu_{m-3}}^{|\mu_{m-2}|} ( x  ) P_{\mu_{m-3}'}^{|\mu_{m-2}|} ( x  ) \,dx .
\end{align*}
From \cite{andrews}  p.457 we have that
\begin{equation*}
\int_{-1}^{1} \left[ P_{\mu_{m-3}}^{|\mu_{m-2}|} ( x  ) \right]^2  \,dx = \frac{2(\mu_{m-3}+|\mu_{m-2}|)!}{(2\mu_{m-3}+1)(\mu_{m-3}-|\mu_{m-2}|)!} = \frac{2}{4 \pi \gamma_{\mu_{m-3},\mu_{m-2}} \gamma_{\mu_{m-3},\mu_{m-2}}}.
\end{equation*}
Therefore,
\begin{equation*}
\int_0^{\pi} P_{\mu_{m-3}}^{|\mu_{m-2}|} ( \cos \theta_{m-2}  ) P_{\mu_{m-3}'}^{|\mu_{m-2}'|} ( \cos \theta_{m-2}  ) \,d \zeta_{m-2}(\theta_{m-2}) = \frac{1}{4\pi} \frac{\delta_{\mu_{m-3} \mu_{m-3}'}}{\gamma_{\mu_{m-3},\mu_{m-2}} \gamma_{\mu_{m-3}',\mu_{m-2}}} ,
\end{equation*}
since associated Legendre polynomials are orthogonal on $[-1,1]$. This implies equation \eqref{eq2} and in what follows we can restrict ourselves to the case $\mu_{m-3}=\mu_{m-3}'$.

For establishing the system of equations \eqref{eq3}, we begin with establishing the equation for $i=m-3$, that is,
\begin{align}\label{eq3-1}
&\tilde{\gamma}_{\mu_{m-4},\mu_{m-3}} \tilde{\gamma}_{\mu_{m-4}',\mu_{m-3}} \int_0^{\pi} C_{\mu_{m-4}-\mu_{m-3}}^{\mu_{m-3}+1}  ( \cos \theta_{m-3}  ) (\sin \theta_{m-3})^{\mu_{m-3}} C_{\mu_{m-4}'-\mu_{m-3}}^{\mu_{m-3}+1}  ( \cos \theta_{m-3}  )  \nonumber \\
& (\sin \theta_{m-3})^{\mu_{m-3}} \,d \zeta_{m-3}(\theta_{m-3}) = \frac{1}{\int_{-1}^1 (1-x^2)^{\frac{1}{2}} \,dx} \delta_{\mu_{m-4} \mu_{m-4}'} .
\end{align}
The integrand is a polynomial of degree $2\mu_{m-3}+\mu_{m-4}-\mu_{m-3}+\mu_{m-4}'-\mu_{m-3} = \mu_{m-4} + \mu_{m-4}' \leq 2d$. Also since $\zeta_{m-3}$ corresponds to a quadrature formula for $a(x)=\sqrt{1-x^2}$ that integrates polynomials of degree $2d$ exactly, it follows from Lemma \ref{lemma} for $z=2d$ and $a(x)=\sqrt{1-x^2}$ that
\begin{align*}
&\int_0^{\pi}  (\sin \theta_{m-3})^{2\mu_{m-3}} C_{\mu_{m-4}-\mu_{m-3}}^{\mu_{m-3}+1}  ( \cos \theta_{m-3}  ) C_{\mu_{m-4}'-\mu_{m-3}}^{\mu_{m-3}+1}  ( \cos \theta_{m-3}  ) \,d \zeta_{m-3}(\theta_{m-3}) \\
&= \sum_{j=1}^r \omega_{m-3}^{j} (1-(x_{m-3}^{j})^2)^{\mu_{m-3}} C_{\mu_{m-4}-\mu_{m-3}}^{\mu_{m-3}+1} ( x_{m-3}^{j}  ) C_{\mu_{m-4}'-\mu_{m-3}}^{\mu_{m-3}+1}  ( x_{m-3}^{j}  ) \\
&= \frac{1}{\int_{-1}^1 \sqrt{1-x^2} \,dx} \int_{-1}^1 \sqrt{1-x^2} (1-x^2)^{\mu_{m-3}} C_{\mu_{m-4}-\mu_{m-3}}^{\mu_{m-3}+1}  ( x  ) C_{\mu_{m-4}'-\mu_{m-3}}^{\mu_{m-3}+1}  ( x  ) \,dx \\
&= \frac{1}{\pi /2}  \int_{-1}^1 (1-x^2)^{\mu_{m-3}+\frac{1}{2}} C_{\mu_{m-4}-\mu_{m-3}}^{\mu_{m-3}+1} ( x  ) C_{\mu_{m-4}'-\mu_{m-3}}^{\mu_{m-3}+1}  ( x  ) \,dx.
\end{align*}
From \cite{andrews}, Corollary 6.8.4, we have that
\begin{align*}
&\int_{-1}^1 (1-x^2)^{\mu_{m-3}+\frac{1}{2}} \left[C_{\mu_{m-4}-\mu_{m-3}}^{\mu_{m-3}+1}  ( x  ) \right]^2 \,dx  \\
&= \frac{\pi}{2} \frac{(\mu_{m-4}+\mu_{m-3}+1)!}{2^{2 \mu_{m-3}} (\mu_{m-3}!)^2 (\mu_{m-4}+1)(\mu_{m-4}-\mu_{m-3})!} = \frac{1}{\tilde{\gamma}_{\mu_{m-4},\mu_{m-3}} \tilde{\gamma}_{\mu_{m-4},\mu_{m-3}}}.
\end{align*}
Therefore,
\begin{align*}
&\int_0^{\pi}  (\sin \theta_{m-3})^{2\mu_{m-3}} C_{\mu_{m-4}-\mu_{m-3}}^{\mu_{m-3}+1}  ( \cos \theta_{m-3}  ) C_{\mu_{m-4}'-\mu_{m-3}}^{\mu_{m-3}+1}  ( \cos \theta_{m-3}  ) \,d \zeta_{m-3}(\theta_{m-3}) \\
& =\frac{2}{\pi} \frac{\delta_{\mu_{m-4} \mu_{m-4}'}}{\tilde{\gamma}_{\mu_{m-4},\mu_{m-3}} \tilde{\gamma}_{\mu_{m-4}',\mu_{m-3}}}
\end{align*}
since Gegenbauer polynomials $C_{\mu_{m-4}-\mu_{m-3}}^{\mu_{m-3}+1}(x)$ are orthogonal with respect to $(1-x^2)^{\mu_{m-3}+1/2}$ on the interval 
$[-1,1]$. This implies \eqref{eq3-1} and in what follows we can restrict ourselves to the case $\mu_{m-4}=\mu_{m-4}'$.

It remains to show that if \eqref{eq3} holds for $i=k+1$, that is, if
\begin{align}\label{eq3-2}
&\tilde{\gamma}_{\mu_{k},\mu_{k+1}} \tilde{\gamma}_{\mu_{k}',\mu_{k+1}'} \int_0^{\pi} C_{\mu_{k}-\mu_{k+1}}^{\mu_{k+1}+\frac{m-k-2}{2}}  ( \cos \theta_{k+1}  ) (\sin \theta_{k+1})^{\mu_{k+1}} C_{\mu_{k}'-\mu_{k+1}'}^{\mu_{k+1}'+\frac{m-k-2}{2}}  ( \cos \theta_{k+1} )  \nonumber \\
& (\sin \theta_{k+1})^{\mu_{k+1}'} \,d \zeta_{k+1}(\theta_{k+1}) = \frac{1}{\int_{-1}^1 (1-x^2)^{\frac{m-k-3}{2}} \,dx} \delta_{\mu_{k} \mu_{k}'} \delta_{\mu_{k+1} \mu_{k+1}'} ,
\end{align}
then \eqref{eq3} holds for $i=k$, that is,
\begin{align}\label{eq3-3}
&\tilde{\gamma}_{\mu_{k-1},\mu_k} \tilde{\gamma}_{\mu_{k-1}',\mu_k'} \int_0^{\pi} C_{\mu_{k-1}-\mu_k}^{\mu_k+\frac{m-k-1}{2}}  ( \cos \theta_k  ) (\sin \theta_k)^{\mu_k} C_{\mu_{k-1}'-\mu_k'}^{\mu_k'+\frac{m-k-1}{2}}  ( \cos \theta_k  ) \nonumber \\
& (\sin \theta_k)^{\mu_k'} \,d \zeta_k(\theta_k)  = \frac{1}{\int_{-1}^1 (1-x^2)^{\frac{m-k-2}{2}} \,dx} \delta_{\mu_{k-1} \mu_{k-1}'} \delta_{\mu_k \mu_k'} .
\end{align}
Note that we use somewhat a ``backward induction step'' since $\mu_k \geq \mu_{k+1}$.

Now since \eqref{eq3-2} holds, for proving \eqref{eq3-3} we can restrict ourselves to the case $\mu_k=\mu_k'$. The integrand in \eqref{eq3-3} is a polynomial of degree $2\mu_k+\mu_{k-1}-\mu_k+\mu_{k-1}'-\mu_k = \mu_{k-1} + \mu_{k-1}' \leq 2d$. Furthermore, since $\zeta_k$ corresponds to a quadrature formula for $a(x)=(1-x^2)^{\frac{m-k-2}{2}}$ that integrates polynomials of degree $2d$ exactly, we have from Lemma \ref{lemma} for $z=2d$ and $a(x)=(1-x^2)^{\frac{m-k-2}{2}}$ that
\begin{align*}
&\int_0^{\pi} (\sin \theta_k)^{2 \mu_k} C_{\mu_{k-1}-\mu_k}^{\mu_k+\frac{m-k-1}{2}}  ( \cos \theta_k )  C_{\mu_{k-1}'-\mu_k}^{\mu_k+\frac{m-k-1}{2}}  ( \cos \theta_k  )  \,d \zeta_k(\theta_k) \\
&= \sum_{j=1}^r \omega_j^k (1-(x_k^k)^2)^{\mu_k} C_{\mu_{k-1}-\mu_k}^{\mu_k+\frac{m-k-1}{2}}  ( x_j^k ) C_{\mu_{k-1}'-\mu_k}^{\mu_k+\frac{m-k-1}{2}}  ( x_j^k  ) \\
&=\frac{1}{\int_{-1}^1 (1-x^2)^{\frac{m-k-2}{2}} \,dx} \int_{-1}^1 (1-x^2)^{\mu_k + \frac{m-k-2}{2}} C_{\mu_{k-1}-\mu_k}^{\mu_k+\frac{m-k-1}{2}}  ( x  ) C_{\mu_{k-1}'-\mu_k}^{\mu_k+\frac{m-k-1}{2}}  ( x ) \,dx  \\
& =  \frac{1}{\int_{-1}^1 (1-x^2)^{\frac{m-k-2}{2}} \,dx} \frac{\delta_{\mu_{k-1} \mu_{k-1}'}}{\tilde{\gamma}_{\mu_{k-1},\mu_k} \tilde{\gamma}_{\mu_{k-1}',\mu_k}} ,
\end{align*}
since Gegenbauer polynomials $C_{\mu_{k-1}-\mu_k}^{\mu_k+\frac{m-k-1}{2}}(x)$ are orthogonal with respect to $(1-x^2)^{\mu_k+\frac{m-k-2}{2}}$ on interval $[-1,1]$
and 
\begin{align*}
&\int_{-1}^1 (1-x^2)^{\mu_k + \frac{m-k-2}{2}} \Big[ C_{\mu_{k-1}-\mu_k}^{\mu_k+\frac{m-k-1}{2}}  ( x ) \Big]^2 \,dx \\
&= \frac{\pi (\mu_{k-1}+\mu_k+m-k-2)!}{2^{2\mu_k+m-k-3}(\mu_{k-1}-\mu_k)!(2\mu_{k-1}+m-k-1)\Gamma^2\left(\mu_k + \frac{m-k-1}{2}\right)} = \frac{1}{\tilde{\gamma}_{\mu_{k-1},\mu_k} \tilde{\gamma}_{\mu_{k-1},\mu_k}} .
\end{align*}
[see again \cite{andrews} Corollary 6.8.4].
This implies \eqref{eq3-3} and by induction the system of equations \eqref{eq3} is established which completes the proof of the theorem.
\end{proof}

\bigskip

\begin{example} \label{ex1}
\rm
To illustrate our approach we consider the dimension $m=4$ and
a series expansion of  order $d=4$.  By Theorem \ref{optimal-discrete-fourier} with 
$r=d+1=5$ we have to consider the weight functions
$$a_1(x) = (1-x^2)^{1/2}\ , \quad a_2(x) = 1 .$$
The  corresponding Gegenbauer polynomials are given by 
$$
C_5^1(x) = 32x^5 -32x^3+ 6x\ , \quad C^{1/2}_5(x) = \frac{1}{8}(63x^5-70x^3 + 15 x)\ ,
$$
and  we obtain  the following discrete optimal design $\zeta^*_1 \otimes \zeta^*_2 \otimes \nu^*$ given by
\begin{equation}\label{discopt4}
\begin{split}
\zeta^*_1 &=
\begin{pmatrix}
\tfrac{\pi}{6} &\tfrac{\pi}{3}  & \tfrac{\pi}{2} & \tfrac{2\pi}{3} & \tfrac{5\pi}{6}   \\
\tfrac{1}{12} & \tfrac{1}{4} & \tfrac{1}{3} & \tfrac{1}{4} & \tfrac{1}{12}
\end{pmatrix},
\\
\zeta^*_2 &= \begin{pmatrix}
 \arccos (x_2) & \arccos (x_1) &  \tfrac{\pi}{2} & \arccos (-x_1) &  \arccos (-x_2) \\
 \tfrac{322-13\sqrt{70}}{1800} &\tfrac{322+13\sqrt{70}}{1800} & \tfrac{64}{225} &  \tfrac{322+13\sqrt{70}}{1800} &\tfrac{322-13\sqrt{70}}{1800}
\end{pmatrix} ,
\\
\nu^* &=
\begin{pmatrix}
-\tfrac{7}{9}\pi & -\tfrac{5}{9}\pi & -\tfrac{3}{9}\pi & -\tfrac{1}{9}\pi  & \tfrac{1}{9}\pi & \tfrac{3}{9}\pi  & \tfrac{5}{9}\pi &\tfrac{7}{9}\pi & \pi \\
\tfrac{1}{9} & \tfrac{1}{9} &\tfrac{1}{9} &\tfrac{1}{9} &\tfrac{1}{9} &\tfrac{1}{9} &\tfrac{1}{9} &\tfrac{1}{9} &\tfrac{1}{9}
\end{pmatrix},
\end{split}
\end{equation}
where $x_1= \tfrac{1}{3} \sqrt{\frac{1}{7} (35-2\sqrt{70})}$ and $x_2= \tfrac{1}{3} \sqrt{\tfrac{1}{7} (35+2\sqrt{70})}$. 
By Theorem \ref{optimal-discrete-fourier} this design is $\Phi_p$- and $\Phi_{E_s}$-optimal.
\\
We now compare the optimal design $\zeta^*_1 \otimes \zeta^*_2 \otimes \nu^*$  with  two uniform designs
 $\hat\zeta_1 \otimes \hat\zeta_2 \otimes \hat\nu$ and  $\tilde\zeta_1 \otimes \tilde\zeta_2 \otimes \tilde\nu$, where
the marginal distributions  of these designs are given by 
\begin{equation}\label{equis4}
\begin{split}
\hat\zeta_1 =
\begin{pmatrix}
0 &\tfrac{\pi}{4}  & \tfrac{\pi}{2} & \tfrac{3\pi}{4} & \pi   \\
\tfrac{1}{5} & \tfrac{1}{5} & \tfrac{1}{5} & \tfrac{1}{5} & \tfrac{1}{5}
\end{pmatrix},
\hat\zeta_2 = \begin{pmatrix}
0 &\tfrac{\pi}{4}  & \tfrac{\pi}{2} & \tfrac{3\pi}{4} & \pi   \\
\tfrac{1}{5} & \tfrac{1}{5} & \tfrac{1}{5} & \tfrac{1}{5} & \tfrac{1}{5}
\end{pmatrix} ,
\hat\nu =\nu^*,
\end{split}
\end{equation}
and 
\begin{equation}\label{modif4}
\begin{split}
\tilde{\zeta}_1 &=
\begin{pmatrix}
\tfrac{\pi}{6} &\tfrac{\pi}{3}  & \tfrac{\pi}{2} & \tfrac{2\pi}{3} & \tfrac{5\pi}{6}   \\
\tfrac{1}{5} & \tfrac{1}{5} & \tfrac{1}{5} & \tfrac{1}{5} & \tfrac{1}{5}
\end{pmatrix}, \\
\tilde{\zeta}_2 &= \begin{pmatrix}
 \arccos (x_2) & \arccos (x_1) &  \tfrac{\pi}{2} & \arccos (-x_1) &  \arccos (-x_2) \\
\tfrac{1}{5} & \tfrac{1}{5} & \tfrac{1}{5} & \tfrac{1}{5} & \tfrac{1}{5}
\end{pmatrix} ,
\tilde{\nu}= \nu^*, 
\end{split}
\end{equation}
respectively. Note that the design $\hat\zeta_1 \otimes \hat\zeta_2 \otimes \hat\nu$ defined by \eqref{equis4} corresponds to a uniform distribution on a grid in $[0, \pi ]\times [0, \pi] \times [-\pi, \pi]$, while the design $\tilde\zeta_1 \otimes \tilde\zeta_2 \otimes \tilde\nu$  in  \eqref{modif4} is an equidistant version of the optimal design $\zeta^*_1 \otimes \zeta^*_2 \otimes \nu^*$. 
In particular, it uses the same support points as the optimal design.

To  compare the  uniform designs with the optimal design  $\zeta^*_1 \otimes \zeta^*_2 \otimes \nu^*$ obtained by Theorem \ref{optimal-discrete-fourier} 
we consider the efficiency
$$
\mbox{eff}(\zeta_1 \otimes \zeta_{2} \otimes \nu) = \frac{\Phi(\zeta_1 \otimes \zeta_{2} \otimes \nu)}{\Phi(\zeta^*_1 \otimes \zeta^*_{2} \otimes \nu^*)},
$$
where $\Phi$ is either the $D$-, $E$- or $\Phi_{E_s}$-optimality criterion. 

We focus on the estimation of $K^T{\boldsymbol{c}}$ where we fix $q=1$, $k_0=0$ and $k_1=4$ and
 $K$ is a block matrix of the form \eqref{K} with appropriate blocks given by \eqref{K-more}. For the case of $\Phi_{E_s}$-optimality we set $s= s(k_0) + s(k_1) = 26$.
The $D$-, $E$- and $\Phi_{E_s}$-efficiencies of the designs $\hat\zeta_1 \otimes \hat\zeta_2 \otimes \hat\nu$ and $\tilde\zeta_1 \otimes \tilde\zeta_2 \otimes \tilde\nu$ are presented in Table \ref{tab1}.\\
For the modified optimal design $\tilde\zeta_1 \otimes \tilde\zeta_2 \otimes \tilde\nu$  (with the same support points as the optimal design) 
we observe a good $D$-efficiency, however the  $\Phi_{E_s}$- and the $E$-efficiencies  are substantially smaller
($54.58\%$ and $49.56\%$, repectively). The uniform design $\hat\zeta_1 \otimes \hat\zeta_2 \otimes \hat\nu$ performs worse with respect to the all considered criteria which shows that 
this uniform design is inefficient in applications.
\begin{table}[h!]
\centering
\begin{tabular}{|r|rrr|}
  \hline
 & $D$-efficiency & $E$-efficiency & $\Phi_{E_s}$-efficiency \\
  \hline
$\hat\zeta_1 \otimes \hat\zeta_2 \otimes \hat\nu$  & 40.64 & 3.18 & 11.27 \\
  \hline
 $\tilde\zeta_1 \otimes \tilde\zeta_2 \otimes \tilde\nu$  & 91.05 & 49.56 & 54.58 \\
   \hline
\end{tabular}
\caption{\it  Efficiencies (in \%) for the the uniform designs $\hat\zeta_1 \otimes \hat\zeta_2 \otimes \hat\nu$ and  $\tilde\zeta_1 \otimes \tilde\zeta_2 \otimes \tilde\nu$  defined in  \eqref{equis4} and
 \eqref{modif4}. }
\label{tab1}
\end{table}
\end{example}

\section{Symmetrized hyperspherical harmonics}  \label{sec4}
In the previous example we have already  shown that the use of the optimal designs yields a substantially more accurate statistical inference in series estimation with hyperspherical harmonics. In this section we consider a typical application of
these functions (more precisely of linear combinations of hyperspherical harmonics)  in material sciences 
and   demonstrate some  advantages of the new designs in this context. Due to space limitations we are not able 
to provide the complete background  on the representations of crystallographic texture however, we explain the main ideas and
refer to  \cite{bunge1993}, \cite{masonschuh2008}   and \cite{PATALA20121383} for further explanation. Some helpful background with more details can also be found in the monograph of
\cite{Marinucci2011}.

\begin{example}\label{ex2}
{\rm
We begin with a brief discussion of the case $m=2$ which - although not relevant for applications in  material sciences - is very helpful for understanding the main idea behind the construction of symmetrized hyperspherical harmonics. In this case the Fourier basis
\begin{align*}
\Big \{ \dfrac{1}{\sqrt{2}},\cos(x),\sin(x),\cos(2x), \sin(2 x),\; \ldots   \Big  \} ,
\end{align*}
is a complete orthonormal system in the Hilbert space ${L}^2([0,2\pi))$ with the common inner product 
$
\langle f,g \rangle = \dfrac{1}{\pi}\int_0^{2\pi} f(x)g(x)dx~.
$
The aim is now to construct an orthonormal basis  for the subspace of functions in ${L}^2([0,2\pi))$, which
are invariant with respect to  the rotation group $\{ R_0, R_{\pi/2} , R_{\pi} , R_{3\pi/2} \}$ defined by 
\begin{align}\label{def:rotgroup}
R_a: 
\begin{cases} 
& [0,2\pi) \to [0,2\pi)\\
& x \mapsto x + a \mod (2\pi)
\end{cases}    ~~~(a \in \{0,\pi/2,\pi,3/2\pi\}),
\end{align}
 that is  $f(\cdot) = f(R_{a}^{-1}(\cdot))$ (or equivalently $f(\cdot) = f(R_{a}(\cdot))$)  for all $a \in \{0,\pi/2,\pi,3/2\pi\}$. 
For this purpose consider the trigonometric polynomial
\begin{align}\label{eq:fourierexp}
f(x)
&=  \dfrac{a_0}{\sqrt{2}} + \sum_{k=1}^\infty a_k \cos(kx) + b_k \sin(kx)
= \sum_{k=0}^\infty c_k^T \cdot Y_k(x)~,
\end{align}
where the vectors  $c_k$ and $Y_k(x)$ are defined by
\begin{align*}
c_k = (a_k, b_k)^T
\quad
\text{and}
\quad
Y_k(x) =
\begin{cases} 
(1/\sqrt{2},0)^T\quad\quad\quad\quad\quad\ k=0,\\
(\cos(kx),\sin(kx))^T\quad \text{otherwise}~
\end{cases},
\end{align*}
respectively,  and assume that the function $f$ is invariant with respect to 
the rotation group $\{ R_0, R_{\pi/2} , R_{\pi} , R_{3\pi/2} \}$, that  is, 
\begin{align*}
\sum_{k=0}^\infty c_k^T \cdot Y_k( x)  = f(x)  = 
f(R_a(x))
= \sum_{k=0}^\infty c_k^T \cdot Y_k(R_a(x)) = \sum_{k=0}^\infty c_k^T \cdot D_k(a) \cdot Y_k(x) ,
\end{align*}
where the matrices $D_k$ are defined by
 \begin{align*}
D_k(a) = \begin{pmatrix}
\cos(ka) & -\sin(ka) \\ 
\sin(ka) & \cos(ka)
\end{pmatrix}~,
\end{align*}
where we have used the addition formulas  for the trigonometric functions. 
This means that $f$ is invariant under $R_a$ if and only if $c_k$ is an eigenvector for the eigenvalue $1$ of $D_k(a)^T$.
Because  $\{ R_0, R_{\pi/2} , R_{\pi} , R_{3\pi/2} \}$ is  
generated by $R_{\pi/2}$, it  suffices to consider the case $a= \pi /2$.  It is now easy to see that 
only the matrices $D_{4\ell} (\pi /2)^T =I_2 $ have the  eigenvalue $1$, which is  of multiplicity $2$ with corresponding 
eigenvectors $ c_{4\ell} = (\beta,0)^T$, $ \tilde c_{4\ell} = (0,\gamma)^T$ ($\beta, \gamma \in \mathbb{R}\setminus \{0\}$).
Consequently, a complete orthonormal basis of  the subset of all functions in $L^2([0,2\pi))$, which are invariant with 
respect to the rotation group  $\{ R_0, R_{\pi/2} , R_{\pi} , R_{3\pi/2} \}$, is obtained by choosing $\beta=\gamma=1$, which yields the linear combinations
$\{ c_{4\ell} ^T Y_{4\ell} (x) , \tilde c_{4\ell} ^T Y_{4\ell} (x)  \}_{\ell =0 ,1 ,\ldots } $  given by 
\begin{align*}
\Big\{ \dfrac{1}{\sqrt{2}},\; \cos(4x),\; \sin(4x),\; \cos(8x),\; \sin(8x),\dots \Big\} .
\end{align*} 
}
\end{example}

\bigskip

In applications in material sciences the dimension is $m=4$ and the groups under consideration are much more complicated and  induce crystal symmetries. For example,
\cite{masonschuh2008} define representations of  crystallographic textures as quaternion distributions (this corresponds to the case  $m=4$ in our notation) by  series expansions in terms of hyperspherical harmonics to reflect sample and crystal symmetries such that the resulting expansions are  more efficient. 
For this purpose they  define the symmetrized hyperspherical harmonics as  specific linear combinations of real hyperspherical harmonics which remain invariant under rotations corresponding to the simultaneous application of a crystal symmetry and sample symmetry operation. The exact definition of the 
symmetrized hyperspherical harmonics is complicated and requires sophisticated arguments from representation theory [see Sections 2 - 4 in  \cite{masonschuh2008}], but - in principle - it follows 
essentially  the same arguments as described in Example \ref{ex2}. \\
More precisely, the  groups induced by the  
crystal symmetry, sample symmetry operation  and  the  level of resolution $\lambda$, 
define    $N(\lambda)$ {\it symmetrized hyperspherical harmonics}  of the  form 
\begin{equation}\label{symhyphar}
\accentset{\therefore}{Z}_\lambda^{\eta}(\theta_1, \theta_2, \phi)  = \sum_{\mu_1=0}^{\lambda} \sum_{\mu_2=-\mu_1}^{\mu_1}{\alpha}^{\eta}_{\lambda, \mu_1, \mu_2} Y_{\lambda, \mu_1, \mu_2}(\theta_1, \theta_2, \phi), \quad \eta = 1, \ldots, N(\lambda)\ ,
\end{equation}
where  the coefficients ${\alpha}^{\eta}_{\lambda, \mu_1, \mu_2}$ are well defined 
and  can be determined form the symmetry properties.
A list of the first at least $30$ symmetrized  hyperspherical harmonics polynomials for the different $11$ point groups can be found in 
the online supplement of   \cite{masonschuh2008}.
 If the coefficients are standardized appropriately, the symmetrized hyperspherical harmonics also form a complete
 orthonormal system, that is, 
 $$
  \int_{-\pi} ^\pi
 \int_0^\pi \int_0^\pi 
 \accentset{\therefore}{Z}_\lambda^{\eta}(\theta_1, \theta_2, \phi) 
  \accentset{\therefore}{Z}_{\lambda'}^{\eta'} (\theta_1, \theta_2, \phi)  \sin^2 \theta_1 \sin  \theta_1 d \theta_1d \theta_2 d \phi 
  = \delta_{\lambda\lambda'}  \delta_{\eta\eta'} ~.
 $$
  Moreover,  any square integrable function $g$ that satisfies the same requirement of crystal and sample symmetry can be uniquely represented as a linear combination of these symmetrized hyperspherical harmonics in the form
\begin{equation}\label{symhypexp}
g(\theta_1, \theta_2, \phi) = \sum_{\lambda=0, 2, \ldots, \infty} \sum_{\eta=1}^{N(\lambda)} c_{\lambda, \eta}\accentset{\therefore}{Z}_\lambda^{\eta}(\theta_1, \theta_2, \phi).
\end{equation}
\cite{PATALA20121383} obtained estimates of the missorientation distribution function by fitting experimentally measured missorientation data to a linear combination of
symmetrized hyperspherical harmonics,
while 
\cite{PATALA20133068} used truncated series to obtain estimates of the grain boundary distribution from simulated data
As these experiments are very expensive and the simulations are very time consuming it is of particular importance to obtain 
good designs for the estimation by series of hypershperical harmonics.
 Therefore, we now consider the linear regression model \eqref{2.1} where the vector of regression functions is obtained by the 
truncated expansion of the function $g$ of order $d$, that is,
\begin{equation}\label{truncsymhypexp}
\sum_{\lambda=0, 2, \ldots, d} \sum_{\eta=1}^{N(\lambda)} c_{\lambda, \eta}\accentset{\therefore}{Z}_\lambda^{\eta}(\theta_1, \theta_2, \phi),
\end{equation}
and investigate  the performance of the designs determined in SectionÊ  
\ref{sec3} in models of the form \eqref{truncsymhypexp}.
Due to space restrictions we concentrate on the case   $d=4$ and  on the symmetrized hyperspherical harmonics 
for samples with orthorhombic symmetry and crystal symmetry corresponding to the crystallographic point groups $1$ and $2$.
Similar results for expansions of higher order and different crystallographic point groups can be obtained following along the same lines. \\
For the crystallographic point group $1$ there are $11$ symmetrized hyperspherical harmonics up to order $d=4$ which 
can be obtained from the  online supplement of   \cite{masonschuh2008} and   are given by
\begin{align} \label{crystgroup1}
\begin{split}
&\accentset{\therefore}{Z}_0^1 = Y_{0,0,0} \\
&\accentset{\therefore}{Z}_4^1 = \sqrt{\tfrac{2}{5}}\quad Y_{4,0,0} + \sqrt{\tfrac{7}{20}}\quad Y_{4,4,0} + \sqrt{\tfrac{1}{4}}\quad Y_{4,4,4} \\
&\accentset{\therefore}{Z}_4^2 = \sqrt{\tfrac{2}{5}}\quad Y_{4,1,0} -\sqrt{\tfrac{1}{10}}\quad Y_{4,3,0} -\sqrt{\tfrac{1}{2}}\quad Y_{4,4,-4} \\
&\accentset{\therefore}{Z}_4^3 = \sqrt{\tfrac{2}{5}}\quad Y_{4,1,1} +\sqrt{\tfrac{3}{80}}\quad Y_{4,3,1} -\sqrt{\tfrac{1}{16}}\quad Y_{4,3,3} +\sqrt{\tfrac{7}{16}}\quad Y_{4,4,-1} +\sqrt{\tfrac{1}{16}}\quad Y_{4,4,-3} \\
&\accentset{\therefore}{Z}_4^4 = \sqrt{\tfrac{2}{5}}\quad Y_{4,1,-1} +\sqrt{\tfrac{3}{80}}\quad Y_{4,3,-1} +\sqrt{\tfrac{1}{16}}\quad Y_{4,3,-3} -\sqrt{\tfrac{7}{16}}\quad Y_{4,4,1} +\sqrt{\tfrac{1}{16}}\quad Y_{4,4,3} \\
&\accentset{\therefore}{Z}_4^5 = \sqrt{\tfrac{4}{7}}\quad Y_{4,2,0} + \sqrt{\tfrac{5}{28}}\quad Y_{4,4,0} -\sqrt{\tfrac{1}{4}}\quad Y_{4,4,4} \\
&\accentset{\therefore}{Z}_4^6 = \sqrt{\tfrac{2}{7}}\quad Y_{4,2,1} -\sqrt{\tfrac{5}{16}}\quad Y_{4,3,-1} +\sqrt{\tfrac{3}{16}}\quad Y_{4,3,-3} +\sqrt{\tfrac{3}{112}}\quad Y_{4,4,1} +\sqrt{\tfrac{3}{16}}  Y_{4,4,3} \\
&\accentset{\therefore}{Z}_4^7 = \sqrt{\tfrac{2}{7}}\quad Y_{4,2,-1} +\sqrt{\tfrac{5}{16}}\quad Y_{4,3,1} +\sqrt{\tfrac{3}{16}}\quad Y_{4,3,3} +\sqrt{\tfrac{3}{112}}\quad Y_{4,4,-1} -\sqrt{\tfrac{3}{16}}  Y_{4,4,-3} \\
&\accentset{\therefore}{Z}_4^8 = \sqrt{\tfrac{4}{7}}\quad Y_{4,2,2} - \sqrt{\tfrac{3}{7}}\quad Y_{4,4,2} \\
&\accentset{\therefore}{Z}_4^9 = \sqrt{\tfrac{2}{7}}\quad Y_{4,2,-2} - \sqrt{\tfrac{1}{2}}\quad Y_{4,3,2} - \sqrt{\tfrac{3}{14}}\quad Y_{4,4,-2} \\
&\accentset{\therefore}{Z}_4^{10} = Y_{4,3,-2}.
\end{split}
\end{align}
Note that the  $26$ functions   $(Y_{0, 0, 0},  Y_{4,0,0}, \ldots, Y_{4,3,3}, Y_{4, 4, -4}, \ldots, Y_{4, 4, 4})^T$ 
define $11$  symmetrized hyperspherical harmonics.
Consequently, considering the symmetries of the crystallographic group $1$ the vector of regression functions 
in model \eqref{2.1}  is of the form
\begin{equation}\label{fcryst1}
f_1^T= (\accentset{\therefore}{Z}_0^1, \accentset{\therefore}{Z}_4^1, \ldots, \accentset{\therefore}{Z}_4^{10})^T.
\end{equation}

\begin{figure}[h!]
\begin{center}
\includegraphics[height=0.8\textheight]{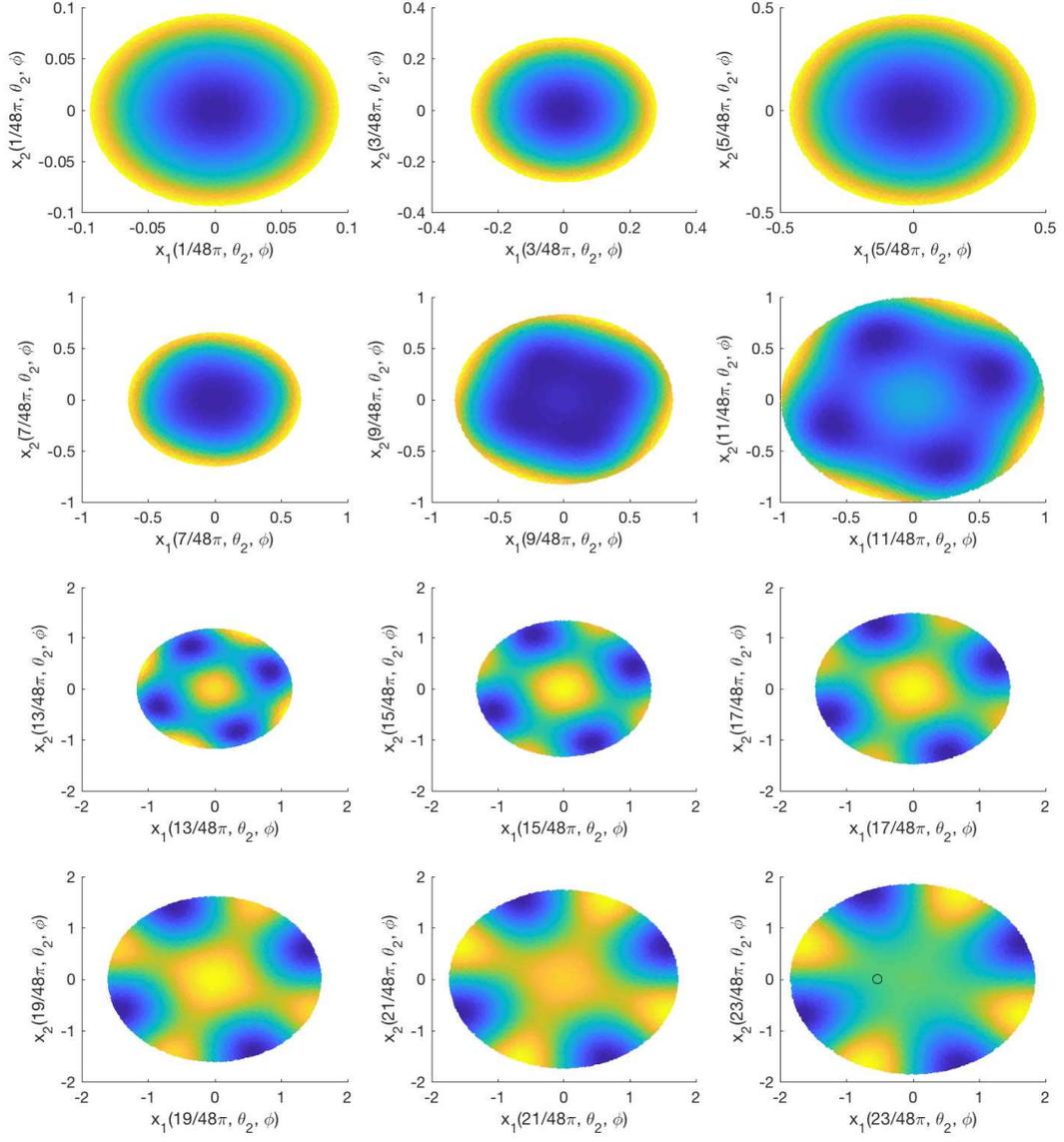}
\end{center}
\caption{\it 
\label{symhyp24}
Visualization of the symmetrized hyperspherical harmonic $\protect\accentset{\therefore}{Z}_4^2$ of the crystallograhic point group $1$ using the projection of the hyperangles onto the two-dimensional disk given by \eqref{eqproject}. For each panel, $\theta_1$ is fixed to a value in $\{\tfrac{1}{48}\pi,\tfrac{3}{48}\pi, \ldots, \tfrac{11}{48}\pi\}$, while $(\theta_2, \phi)$ vary in  $[0, \tfrac{\pi}{2}]\times[-\pi, \pi]$.}
\end{figure}

To illustrate the symmetries induced by the crystallographic group in  the symmetrized hyperspherical harmonics we use a visualization described by \cite{masonschuh2008}. 
For  a fixed hyperangle $(\theta_1, \theta_2, \phi)$, the functional value of $\accentset{\therefore}{Z}_4^j(\theta_1, \theta_2, \phi)$ is presented by using a projection of the hyperangle to an appropriate two-dimensional disk. 
More precisely, we project the hyperangle $(\theta_1, \theta_2, \phi)$ onto a two-dimensional disk by
\begin{equation}\label{eqproject}
P(\theta_1, \theta_2, \phi) = \begin{pmatrix}x_1(\theta_1, \theta_2, \phi) \\ x_2(\theta_1, \theta_2, \phi) \end{pmatrix} = \begin{pmatrix} R(\theta_1, \theta_2) \cos(\phi) \\ R(\theta_1, \theta_2) \sin(\phi) \end{pmatrix} \ , 
\end{equation}
where the function $R(\theta_1, \theta_2)$ is given by 
$$R(\theta_1, \theta_2)  = (3/2)^{1/3}(\theta_1 - \sin(\theta_1)\cos(\theta_1))^{1/3} \sqrt{2(1-|\cos(\theta_2)|)}.$$
For instance the  angle $(\theta_1, \theta_2, \phi) = (\tfrac{11}{48}\pi, \tfrac{\pi}{4}, \pi)$ is projected onto the point
$(x_1, x_2) = (-0.5323, 0)$.  In
Figure \ref{symhyp24} we display  the value (represented by an appropriate  color)
of the  symmetrized harmonic $\accentset{\therefore}{Z}_4^2$  of the crystallographic group $1$ as a function of the coordinates 
 $(x_1(\theta_1, \theta_2, \phi), x_2(\theta_1, \theta_2, \phi))$.  In each of the twelve panels of Figure \ref{symhyp24}, $\theta_1$ is fixed to one of the values $\tfrac{1}{48} \pi,\tfrac{3}{48} \pi, \ldots, \tfrac{11}{48} \pi$, while the angles $\theta_2$ and $\phi$ vary between $[0, \pi/2]$ and $[-\pi, \pi]$, respectively. 
For instance, the value of $\accentset{\therefore}{Z}_4^j(\theta_1, \theta_2, \phi)$ at the   hyperangle $(\theta_1, \theta_2, \phi) = (\tfrac{11}{48}\pi, \tfrac{\pi}{4}, \pi)$ is presented in the bottom right panel in a light green color (see the black circle in the bottom right panel of Figure \ref{symhyp24}).

We now investigate the efficiency of the optimal design for the estimation of the coefficients in the regression model
 \eqref{2.1} with the hyperspherical harmonics up to order $d=4$, that is, the vector of regression functions is given by 
 $$
 f^T= (Y_{0, 0, 0},Y_{1, 0, 0}, \ldots, Y_{1, 1, 1}, \ldots, Y_{4, 4, -4}, \ldots, Y_{4, 4, 4})^T.
 $$
 The optimal design  for this model has been determined in Example \ref{ex1} and  a tedious calculation shows that the design 
 $\zeta_1^*\otimes \zeta_{2}^* \otimes \nu^*$  defined in \eqref{discopt4} satisfies the general equivalence theorem in
  Section 7.20 of  \cite{pukelsheim}.
 Consequently, this design is also $\Phi_p$-optimal in the regression model \eqref{2.1}, where the vector  
 of regression functions is given by the symmetrized hyperspherical harmonics defined in 
  \eqref{fcryst1}, which correspond  to the crystallographic point group $1$.\\
For the crystallographic point group $2$ there are $7$ symmetrized hyperspherical harmonics up to order $d=4$ consisting of a subset of the functions given in \eqref{crystgroup1}. These symmetrized hyperspherical harmonics define a linear  regression model  of the form \eqref{2.1},
where the vector of regression functions $f_2$ is given by 
\begin{equation}\label{fcryst2}
f_2^T = (\accentset{\therefore}{Z}_0^1,\accentset{\therefore}{Z}_4^1,\accentset{\therefore}{Z}_4^2,\accentset{\therefore}{Z}_4^5,\accentset{\therefore}{Z}_4^8,\accentset{\therefore}{Z}_4^9, \accentset{\therefore}{Z}_4^{10})^T.
\end{equation}
In the case of the crystallographic point group $2$ the design $\zeta_1^* \otimes \zeta_{2}^* \otimes \nu^*$ 
defined by  \eqref{discopt4} is not $\Phi_p$-optimal.  However, using particle swarm optimization 
[see \cite{Clerc2006} for details] we determined   the  $D$-efficiency 
of   the design 
$\zeta_1^* \otimes \zeta_{2}^* \otimes \nu^* $ numerically which  is given by
 $81\%$. 
We also investigate the performance of 
  the designs $\hat\zeta_1 \otimes \hat\zeta_2 \otimes \hat\nu$  and  $\tilde\zeta_1 \otimes \tilde\zeta_2 \otimes \tilde\nu$ defined 
  in  equation \eqref{equis4} and \eqref{modif4}   of  Example \ref{ex1}. The  $D$-efficiencies of these two  designs
   are given by  $59.38\%$ and  $74.59\%$, respectively.  Recall  that the latter design uses the same support points as the optimal design.
Our calculations show that  the design $\zeta_1^* \otimes \zeta_{2}^* \otimes \nu^* $  in \eqref{discopt4} 
provides reasonable efficiencies for estimating the coefficients in the regression model \eqref{2.1} with symmetrized hyperspherical harmonics with respect to the 
crystallographic point group $2$, whereas the uniform design $\hat\zeta_1 \otimes \hat\zeta_2 \otimes \hat\nu$ should not be used in this case.

\appendix

\section{A Technical Result}\label{app}
\label{sec5}

\subsection{Proof  of Lemma \ref{lemma}}

Assume that conditions (A) and (B) are satisfied and let $Q(x)$ be an arbitrary polynomial of degree z. The polynomial $Q$ can be represented in the form
\begin{equation*}
Q(x) = P(x)V_r(x) + R(x) ,
\end{equation*}
where $V_r(x)= \prod_{j=1}^r (x-x_j)$ is of degree $r$, the polynomial $P(x)$ is of degree $z-r$ and the polynomial $R(x)$ is of degree less than $r$.
Since $x_1, \ldots, x_r$ are the zeros of $V_r(x)$ we have that $Q(x_j)=R(x_j)$ for all $j=1,\ldots,r$ and furthermore, because the degree of $R(x)$ is at most $r-1$, it can be represented as
\begin{equation*}
R(x) = \sum_{j=1}^r \ell_j(x) R(x_j).
\end{equation*}
Then from conditions (A) and (B) we obtain that
\begin{align*}
\int_{-1}^1 a(x) Q(x) \,dx  & = \int_{-1}^1 a(x) R(x) \,dx = \sum_{j=1}^r R(x_j) \int_{-1}^1 a(x) \ell_j(x) \,dx\\
& = \sum_{j=1}^r R(x_j) \tilde{a} \omega_j = \tilde{a} \sum_{j=1}^r \omega_j Q(x_j) .
\end{align*}
Using $Q(x)=x^{\ell}$, $\ell=0,\ldots,z$ yields the identities in \eqref{quad} and for $\ell=0$ we obtain the expression $\tilde{a} = \int_{-1}^1 a(x) \,dx$.

Now assume that \eqref{quad} is valid. For $\ell=0,\ldots,z-r$ we have that
\begin{equation*}
\int_{-1}^1 V_r(x) a(x) x^{\ell} \,dx = \tilde{a} \sum_{j=1}^r \omega_j V_r(x_j) x_j^{\ell} = 0 ,
\end{equation*}
which gives condition (A). By noting that $\ell_j(x_k) = \delta_{jk}$ we get that
\begin{equation*}
\frac{1}{\tilde{a}} \int_{-1}^1 a(x) \ell_j(x) \,dx  = \sum_{k=1}^r \omega_k \ell_j(x_k) = \sum_{k=1}^r \omega_k \delta_{jk} = \omega_j ,
\end{equation*}
which gives condition (B).

\section*{Acknowledgements}
This work has been supported in part by the
Collaborative Research Center ``Statistical modelling of nonlinear dynamic processes'' (SFB 823, Teilprojekt A1, C2) of the German Research Foundation
(DFG). H. Dette was  partially supported  by a grant from the National Institute of General Medical Sciences of the National
Institutes of Health under Award Number R01GM107639. The content is solely the responsibility of the authors and does not necessarily
represent the official views of the National Institutes of Health.  The authors would like to thank Rebecca Janisch for providing some background information about  crystallographic textures.

\bibliography{hyperspherical_2}
 \bibliographystyle{apalike}

\end{document}